\title{A Decomposition Method for Large Scale MILPs, with Performance Guarantees and a Power System Application}
\author[R. Vujanic]{Robin Vujanic} %\ead{vujanicr@control.ee.ethz.ch}
\author[P. Mohajerin Esfahani]{Peyman Mohajerin Esfahani} %\ead{julius@caesar.ir},
\author[P. Goulart]{Paul Goulart} %\ead{vergilius@culture.ir},
\author[S. Mari\'ethoz]{S\'ebastien Mari\'ethoz} %\ead{julius@caesar.ir},
\author[M. Morari]{Manfred Morari}%\ead{julius@caesar.ir}
\thanks{RV, PME, and MM are with the Automatic Control Laboratory, ETH Zurich, Switzerland, {\tt \{vujanicr,mohajerin,morari\}@control.ee.ethz.ch}. PG is with the Department of Engineering Science, University of Oxford, and SM is with Bern University of Applied Sciences.}
\begin{document}
\maketitle

 \begin{abstract}
 Lagrangian duality in mixed integer optimization is a useful framework for problems decomposition and for producing tight lower bounds to the optimal objective, but in contrast to the convex counterpart, it is generally unable to produce optimal solutions directly. In fact, solutions recovered from the dual may be not only suboptimal, but even infeasible. In this paper we concentrate on large scale mixed--integer programs with a specific structure that is of practical interest, as it appears in a variety of application domains such as power systems or supply chain management.
 %It is known that for this program structure, the duality gap vanishes as the problem size increases. Here we demonstrate a new relation that the primal solutions recovered from the dual satisfy. Based on this relation we propose a methodology ...
 We propose a solution method for these structures, in which the primal problem is modified in a certain way, guaranteeing that the solutions produced by the corresponding dual are feasible for the original unmodified primal problem. The modification is simple to implement and the method is amenable to distributed computations. We also demonstrate that the quality of the solutions recovered using our procedure improves as the problem size increases, making it particularly useful for large scale instances for which commercial solvers are inadequate. We illustrate the efficacy of our method with extensive experimentations on a problem stemming from power systems.
 \end{abstract}

\section{Introduction}
\label{sec:1_intro}
% !TEX root = revision_example.tex
In this paper we investigate mixed-integer optimization problems in the form
\begin{equation}
\left\{ 
\begin{array}{lll}
	\underset{x}{\text{minimize}} & \sum\limits_{i \in I} c_i\tr x_i\\
	\text{subject to} & \sum\limits_{i \in I} H_i x_i \preceq b\\
	& x_i \in X_i &  \forall i \in I.
\end{array}
\right.
\tag{$\mathcal{P}$}
\label{eq:P}
\end{equation}
We refer to $b \in \mathbb{R}^m$ as the \emph{resource vector}, and to the sets $X_i$ as the \emph{subsystems}. We assume that each of the sets $X_i$ is a non-empty, compact, mixed-integer polyhedral set that can be written as
%We refer to $b \in \mathbb{R}^m$ as the \emph{resource vector}, and assume that the sets $X_i \subseteq \mathbb{R}^{n_i}$ are all non-empty compact, mixed-integer polyhedral sets which can be written as
\begin{equation*}
		X_i = \left\{ x \in \mathbb{R}^{r_i} \times \mathbb{Z}^{z_i} \  \bigl|\bigr. \ A_ix \preceq d_i \right\},
		\label{polyhedral_mixed_integer_feasible_set}
\end{equation*}
with $A_i \in \mathbb{R}^{m_i \times n_i}$ and $d_i \in \mathbb{R}^{m_i}$. We further assume that the problem \ref{eq:P} is feasible and that the total number of subsystems $|I|$ is greater than the length $m$ of the resource vector. Our principal interest is in large-scale optimization problems, i.e.\ those for which $|I| \gg m$, while remaining finite. 

Problem \ref{eq:P} can be viewed generically as modeling any problem for which a large number of subproblems defined on the domains $X_i$, whose description can include integer variables, are coupled through a small number of complicating constraints $\sum_{i \in I} H_i x_i \preceq b$. These coupling constraints determine the limits on the available resources to be shared among the subsystems. Simple examples of problems in this form include classical combinatorial programs such as the multidimensional knapsack problem, in which $X_i = \zo$, and $c_i \succeq 0$, $H_i \succeq 0$ \cite{knapsack_heristics_and_applications}. 
%\robincom{
%Other combinatorial problems fitting our framework are bin packing \cite[p.15]{vanderbeck_wolsey}, cutting stocks \cite[p.20]{vanderbeck_wolsey} capacitated facility location \cite{cornuejols_comparison_1991} and assignment problems \cite{guignard}. These combinatorial problems have a vast number of practical applications, see e.g.\ \cite{knapsack_heristics_and_applications}.
%}

More complicated instances of problems in the form \ref{eq:P}, with more detailed models for the subsystems $X_i$, arise in a variety of contexts. In power systems, scheduling operations of power generation plants \cite{yamin_review_2004} is a decision problem in which the subsystems are the generating units, integer variables in the local models arise due to, e.g., start-up and shut-down costs, and the coupling constraints are related to the requirement that generation must match load.  
%More recently, much research effort has been directed towards the idea of including loads into the operations of the power system. For these control purposes, discrete-time hybrid optimal control problems have been proposed, see \cite{seba_paper,ita_EVs}. Integer variables arise naturally in these contexts, either because of the hybrid nature of the controlled systems, or to encode non-convex constraints, such as cycling limitations on batteries operation. In Section \ref{sec:4_example} we use one of these models to illustrate the performance of our proposed method. 
In supply chain management, models fitting \ref{eq:P} appear in the problem of partial shipments \cite{partial_shipment,my_supply_chains}. Portfolio optimization for small investors, for which mixed-integer models have been proposed, is another example application \cite{small_investor_portfolio}. 
%in which frictions typically neglected for large investment institutions (allowing a convex model) have to be taken into account, resulting in mixed-integer formulations such as the models proposed in \cite{small_investor_portfolio}. 
%In ,  for which lagrangian duality has provided some of the most efficient solution methods . 
Finally, some sparse problems that do not naturally possess the structure of \ref{eq:P} can be reformulated to fit our framework by appropriately permuting rows and columns of the constraints matrix;  \cite{bergner_reformulation} proposes a method to automate this procedure.

A direct solution of \ref{eq:P} is typically problematic when the problem is very large, since the problem amounts to a mixed-integer linear program of possibly very large size.  As a result, the Lagrange dual of \ref{eq:P} is often taken as a useful alternative, because the resulting dual problem is separable in the subsystems despite the presence of the complicating constraints.  When this dual problem is solved by an iterative method, e.g.\ using the subgradient method \cite{bertsekas_nonlinear}, a candidate (primal) solution to \ref{eq:P} can be computed at each iteration.

One of the major drawbacks of this approach is that, for problems affected by a non-zero duality gap such as \ref{eq:P}, any guarantee about the properties of these candidate primal solutions is lost. Even at the dual optimal solution, the associated candidate primal solutions %are not guaranteed to be unique, 
may be suboptimal and can even be infeasible. 
%It is for this reason that Lagrangian duality is typically not used to recover primal solutions directly, but rather as a mean to obtain lower bounds to the optimal objective value.

The principal goal of this paper is to propose a new solution method for problem \ref{eq:P} that preserves the attractive features of solution via the Lagrange dual, while at the same time protecting the recovered primal solutions from infeasibilty.

\textbf{Literature.} Lagrangian relaxation for mixed integer programs was first introduced by \cite{held_karp_1970}, and many of its theoretical properties were described in \cite{geoffrion_74}. Properties of the inner solutions in the convex case are well known \cite[Thm.\ 28.1]{rockafellar}. It is also well known that in general these properties are lost in the mixed-integer case \cite[Section 5.5.3]{bertsekas_nonlinear}. Because of this, primal recovery methods based on Lagrangian duality are often two-phase schemes in which an infeasible solution is found through duality in the first stage, and in the second stage it is rectified into a feasible one using heuristics, see, e.g., \cite{bertsekas_UC, conejo_redondo_shortterm}.

Duality for problems specifically in the form \ref{eq:P} has been studied at least as early as in \cite{ekeland}, where some of its special features were first characterized. In particular, it was noted that the duality gap for this program structure decreases in relative terms as the problem increases in size, as measured by the cardinality of $I$. We will show that the mechanism behind this vanishing gap effect can also be used to recover ``good'' primal solutions for the mixed-integer program \ref{eq:P} directly from the dual, in a way that resembles the convex (zero gap) case. 

In practical applications, this behaviour of the duality gap has been observed in \cite{bertsekas_UC} in the context of unit commitments for power systems. In this case it is exploited in an algorithm that provides solutions to the extended master problem, but no connection to the solutions of the inner problem is provided. It also appears in the multistage stochastic integer programming literature \cite{vanishing_gap_in_sips,caroe_schultz}, where it is used to gauge the strength of the Lagrangian relaxation, but in which no relations to primal solutions are drawn. Another domain in which diminishing gap has been used is in communications, more precisely in optimization of multicarrier communication systems \cite{duality_in_communications}. However, in this case non-convexity is in the objective function rather than due to the presence of integer variables.

\textbf{Current Contribution.} In this paper we further investigate duality for programs structured as \ref{eq:P} and focus on the primal solutions recovered at the dual optimum.
\begin{itemize}
	\item We provide a new relation between the optimizers of a convexified form of \ref{eq:P} and the solutions obtained from the dual problem. This relation holds under mild conditions that are commonly satisfied in practice.
	\item In light of this relation, we propose a new solution method that is guaranteed to produce feasible solutions. 
	\item We also provide a performance bound of the solutions recovered, which indicates that their quality improves as the problem size increases. For particular structures, arising e.g. from underlying physical networks, we refine our theoretical results to improve the performance of the method.
\end{itemize}
From a practical point of view, we note that our proposed procedure is straightforward to implement and is amenable to distributed computations. The performance bound indicates that the method is particularly attractive for the larger problem instances, for which generic purpose solvers may be inadequate.
%The generality of the description of the subsystems $X_i$ allows our results to be exploited in a wide variety of applications. 
We show that the theoretical results are effective in practice via extensive numerical experiments on difficult problems stemming from the field of power systems control. Our method substantially outperforms commercial solvers on these problems. The limitations of the proposed method, as well as ideas to mitigate them, are also discussed in the paper.

%Second, the performance of the solutions recovered may suffer when the budget consumption $H_iX_i$ is not relatively homogeneous among the subsystems. However, the influence of these subsystems on the overall quality of the recovered solution still decreases as the problem becomes larger.

\textbf{Structure of the Paper.} The paper is structured as follows: in Section \ref{sec:2_duality_for_P} we review some of the known results concerning duality for the specific structure of \ref{eq:P}, and we provide a new result related to the primal solutions recovered from the dual. In Section \ref{sec:3_robust_feasible_soln} we propose a new method for primal solution recovery, and provide  performance bounds for these solutions. We also give some results on how to further improve the solutions' quality in some special cases. In Section \ref{sec:4_example} we verify the efficacy of our proposed method on a difficult optimization problem stemming from power systems, and in Section \ref{sec:5_conclusion} we conclude the paper.

\textbf{Notation.} Given some optimization problem $\mathcal{A}$, we denote with $J_{\mathcal{A}}^\star$ its optimal objective and with $J_{\mathcal{A}}(x)$ the performance of the solution $x$ with respect to the objective of $\mathcal{A}$. For a given set $X$, we denote by $\conv(X)$ its convex hull and by $\vert(X)$ the set of vertices of $\conv(X)$. With ``$\succeq$'' we always intend component-wise inequalities (between vectors or matrices), and with $\otimes$ we indicate the cartesian product of sets.
% and with $\oplus$ the stacking of vectors, e.g., $x = \oplus_{i \in I} (x_i) \eqdef [x_1, \dots, x_{|I|}]$. 
The support of a vector $\mathrm{supp}(x)$ is the set of indexes of the non-zero elements: $\mathrm{supp}(x) = \{i: x_i \neq 0\}$, while $(x)^+$ is the projection of $x$ onto the positive orthant, i.e., $(x)^+ \doteq \max (0, x)$. For the specific structure of \ref{eq:P}, we use the \emph{overbar} symbol to indicate quantities related to the contracted version of \ref{eq:P}, as introduced in Section \ref{sec:3_robust_feasible_soln}. Thus, for instance, \ref{eq:Pbar} is the contracted form of \ref{eq:P} and $\Dbar$ is its dual. We use parenthesis to avoid confusing the sub- and superscripts, e.g., we denote by $(x_{\mathcal{P}})_i$ the part of $x_{\mathcal{P}}$ related to subproblem $i \in I$ of problem \ref{eq:P}. Finally, we use the superscript $H^k$ to denote the $k$--th row of matrix $H$.

\section{Duality for Problem \ref{eq:P}}
\label{sec:2_duality_for_P}
% !TEX root = revision_example.tex
%We assume the reader to be familiar with the Lagrangian duality framework for mixed integer programs; a good source is %\cite[Sec. 4.3]{weismantel_bertsimas_book} \cite{lemarechal_lagrange_relaxation}. 
%In this framework, we first form a so-called dual function  $d: \mathbb{R}^m \rightarrow \mathbb{R}$ as

%\pgcom{LOOKED AT. I don't like the section headings in this section.  The section title is too technical, and probably some (or all) of the subsection titles can be dropped to make it flow a bit better. After having read it, I would suggest framing the section as 'Relaxations of Problem \ref{eq:P}, with subsections of the dual/Lagrange relaxation and the convex hull / LP relaxation. }

Consider the dual function $d: \mathbb{R}^m \rightarrow \mathbb{R}$ of problem \ref{eq:P}, defined as
\begin{equation*}
%\begin{array}{lll}
	d(\lambda) \eqdef \min_{x \in X} \bigg ( \sum_{i \in I} c_i\tr x_i + \lambda\tr(\sum_{i \in I} H_i x_i - b) \bigg),
	\label{eq:dual_function}
%\end{array}
\end{equation*}
and then associate to this function the optimization problem %(after algebraic reorganization)
\begin{equation}
\left\{ 
\begin{array}{ll}
	%\max_{\lambda \in \mathbb{R}^m} & d(\lambda)\\
	\sup\limits_{\lambda} & - \lambda\tr b + \sum\limits_{i \in I} \min\limits_{x_i \in X_i} \left( c_i\tr x_i  + \lambda\tr H_i x_i \right)\\
	\text{s.t.} & \lambda \succeq 0.
\end{array}
\right.
\label{eq:D}
\tag{$\mathcal{D}$}
\end{equation}
We call \ref{eq:D} the \emph{dual problem} of \ref{eq:P}, and we refer collectively to the minimizations within \ref{eq:D}, i.e.,
\begin{equation}
	\min_{x_i \in X_i} \left( c_i\tr x_i  + \lambda\tr H_i x_i \right),
	\label{eq:inner_problem_sdef}
\end{equation}
as the \textit{inner problem}. There is substantial practical interest in understanding the properties of the solutions to the inner problem \eqref{eq:inner_problem_sdef} because they are obtained by solving $|I|$ independent (and lower dimensional) minimization problems, in contrast to the single large coupled problem \ref{eq:P}. Additionally, they are usually obtained as by-products of methods used to solve \ref{eq:D} (e.g.\ the subgradient method). These solutions, in particular those attained at the vertices of $\conv(X_i)$, are the central object of this paper:

\begin{Def}[inner problem solutions]
	\label{definition_solns_to_inner_problem}
	For a given multiplier $\lambda \succeq 0$, the set \mbox{$\mxi \subseteq \mathbb{R}^{n_i}$} is defined as the set of inner solutions that are attained at the vertices of $X_i$, i.e.\
	\begin{equation}
		\mxi \eqdef \vert(X_i) \cap \arg \min_{x_i \in X_i} \left( c_i\tr x_i  + \lambda\tr H_i x_i \right).
	\label{eq:mathcal_xi}
	\end{equation}
	Furthermore, we denote by $\xl$ any selection from the set $\mx$, and refer to it as an inner solution.
	%Furthermore, $\mx \eqdef \underset{i \in I}{\otimes} \mxi$. % $\mxone \times \dots \times \mxII$.
\end{Def}
\begin{Fact}
\label{prop:there_are_vertex_solns}
The sets $\mxi$, $i \in I$, are non-empty for any $\lambda \succeq 0$.
\end{Fact}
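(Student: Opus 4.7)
The plan is to exploit the linearity of the inner objective in $x_i$ together with the fact that $\conv(X_i)$ is a polytope, so that a minimizer at a vertex is guaranteed by elementary linear-programming considerations.

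First, I would verify that $\conv(X_i)$ is a polytope. Since $X_i$ is compact, the integer coordinates take values in a finite subset of $\mathbb{Z}^{z_i}$; enumerating these feasible integer assignments $\{\zeta_1,\dots,\zeta_L\}$, the set $X_i$ decomposes as the finite union $X_i = \bigcup_{\ell=1}^{L} P_\ell$, where each $P_\ell \eqdef \{x \in \mathbb{R}^{r_i}\times\{\zeta_\ell\} : A_i x \preceq d_i\}$ is a (bounded) polytope. The convex hull of a finite union of polytopes in a bounded region is itself a polytope, so $\conv(X_i)$ has finitely many vertices. Moreover, since $X_i$ is compact, every extreme point of $\conv(X_i)$ lies in $X_i$; in particular, $\vert(X_i) = \vert(\conv(X_i)) \subseteq X_i$.

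Second, I would use the linear-programming argument. Fix $\lambda \succeq 0$ and let $f(x_i) \eqdef (c_i + H_i^\top \lambda)^\top x_i$. This function is linear in $x_i$. A linear function attains its minimum on a non-empty polytope at one of its vertices, so there exists $v \in \vert(\conv(X_i))$ with
\begin{equation*}
f(v) \;=\; \min_{x_i \in \conv(X_i)} f(x_i).
\end{equation*}
Because $X_i \subseteq \conv(X_i)$, the right-hand side is at most $\min_{x_i \in X_i} f(x_i)$. Conversely, since $v \in \vert(X_i) \subseteq X_i$, one has $f(v) \geq \min_{x_i \in X_i} f(x_i)$. Combining these inequalities gives $f(v) = \min_{x_i \in X_i} f(x_i)$, hence $v \in \arg\min_{x_i \in X_i} f(x_i)$ and simultaneously $v \in \vert(X_i)$. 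Therefore $v \in \mxi$, proving that $\mxi$ is non-empty. The argument holds for every $i \in I$ and every $\lambda \succeq 0$.

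The only non-trivial step is the structural claim that $\conv(X_i)$ is a polytope whose vertices lie in $X_i$; this is where compactness of $X_i$ is essential. Once this is in place, the rest is the standard observation that a linear program over a polytope admits a vertex-optimal solution. I therefore do not anticipate genuine difficulty, only a careful invocation of these two ingredients.
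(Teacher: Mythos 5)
Your proof is correct and follows essentially the same route as the paper's: both rest on the chain of equalities $\min_{x_i \in X_i} f = \min_{x_i \in \conv(X_i)} f = \min_{x_i \in \vert(X_i)} f$ for the linear inner objective, with the vertices of $\conv(X_i)$ lying in $X_i$. The paper states this chain as ``straightforward to observe''; you simply supply the supporting details (the finite decomposition of $X_i$ by integer assignments and the resulting polytope structure of $\conv(X_i)$), which is a fair and accurate elaboration.
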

\begin{proof}
See Appendix \ref{sec:proof_fact_1}.
%\Halmos
\end{proof}

\subsection{Bound on Duality Gap}

For a general mixed integer linear program, the inner solutions $\xls \in \mxs$, in which $\ls$ is an optimizer of \ref{eq:D}, do not possess any ``nice'' property in general: they can be non-unique, suboptimal and even infeasible. In this paper we show that inner solutions for programs structured specifically as \ref{eq:P} do acquire some useful properties. Informally speaking, these additional properties arise mainly from the fact that, as \ref{eq:P} grows in size, it tends to closely approximate a convex program. One known result of this is that the duality gap between \ref{eq:P} and \ref{eq:D} vanishes, in relative terms, as $|I|$ increases.

\begin{Theorem}[bound on duality gap]
\label{thm:bound_on_duality_gap}
	Assume that for any $x_i \in \conv(X_i)$, there exists an $\tilde{x}_i \in X_i$ such that $H_i \tilde{x}_i \preceq H_i x_i$. Then
	\begin{equation}
	\label{eq:duality_gap_bound}
		J_{\mathcal{P}}^\star - J_{\mathcal{D}}^\star \leq m \cdot \underset{i \in I}{\max} \ \gamma_i, \quad \gamma_i \Let \underset{x_i \in X_i}{\max} c_i\tr x_i - \underset{x_i \in X_i}{\min} c_i\tr x_i.
		%\label{eq:duality_gap_bound_1}
	\end{equation}
%	where
%	\begin{equation}
%		\gamma_i = \underset{x_i \in X_i}{\max} c_i\tr x_i - \underset{x_i \in X_i}{\min} c_i\tr x_i.
%		\label{eq:duality_gap_bound}
%	\end{equation}
\end{Theorem}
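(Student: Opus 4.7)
The plan is a Shapley--Folkman-type argument: lift \ref{eq:P} to the convex hull relaxation, show that its value coincides with $\JD$, extract a vertex optimum in which all but at most $m$ subsystem components already lie in $X_i$, then round the few remaining fractional components using the stated assumption while controlling the incurred cost increase by $\gamma_i$.

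\textbf{Step 1 (dual value equals convex-hull relaxation).} Introduce
\begin{equation*}
  \Plp: \quad \min \Big\{ \sum_{i \in I} c_i\tr x_i \;\Big|\; \sum_{i \in I} H_i x_i \preceq b,\; x_i \in \conv(X_i),\; i \in I \Big\}
\end{equation*}
with optimal value $\JPLP$. A classical property of Lagrangian decomposition in which only the linear coupling constraints are dualized gives $\JPLP = \JD$: minimizing $c_i\tr x_i + \lambda\tr H_i x_i$ over $X_i$ and over $\conv(X_i)$ yields the same value, so the inner problem in \ref{eq:D} coincides with the Lagrangian of $\Plp$.

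\textbf{Step 2 (almost all components are vertices).} Parametrize each $\conv(X_i)$ by its vertices, $x_i = \sum_j s_{ij} v_{ij}$ with $v_{ij} \in \vert(X_i)$, $s_{ij} \geq 0$, $\sum_j s_{ij} = 1$, and rewrite $\Plp$ as an LP in the $s$ variables. This LP has $|I|$ simplex equalities plus $m$ coupling inequalities, is feasible, and is bounded by compactness of the $X_i$, hence admits an optimal vertex solution $s^\star$. At such a basic feasible solution, at most $|I|+m$ components $s^\star_{ij}$ are strictly positive. Since $\sum_j s^\star_{ij} = 1$ forces at least one positive entry per subsystem, at most $m$ indices $i$ can carry two or more positive $s^\star_{ij}$. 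Setting $\xislp \eqdef \sum_j s^\star_{ij} v_{ij}$, this means $\xislp \in \vert(X_i) \subseteq X_i$ for all indices outside an exceptional set $\widetilde I \subseteq I$ with $|\widetilde I| \leq m$.

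\textbf{Step 3 (rounding and accounting).} Define $\bar{x}_i \eqdef \xislp$ for $i \notin \widetilde I$ and, using the hypothesis, pick $\bar{x}_i \in X_i$ with $H_i \bar{x}_i \preceq H_i \xislp$ for each $i \in \widetilde I$. Then $\sum_i H_i \bar{x}_i \preceq \sum_i H_i \xislp \preceq b$ and $\bar{x}_i \in X_i$ for every $i$, so $\bar x$ is feasible for \ref{eq:P}. For each $i \in \widetilde I$, since $\bar{x}_i \in X_i$ and $\xislp \in \conv(X_i)$,
\begin{equation*}
  c_i\tr \bar{x}_i - c_i\tr \xislp \leq \max_{x_i \in X_i} c_i\tr x_i - \min_{x_i \in X_i} c_i\tr x_i = \gamma_i,
\end{equation*}
while the difference vanishes for $i \notin \widetilde I$. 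Summing yields
\begin{equation*}
  \JP \leq J_{\mathcal{P}}(\bar{x}) \leq \JPLP + \sum_{i \in \widetilde I} \gamma_i \leq \JD + m \cdot \max_{i \in I} \gamma_i.
\end{equation*}
The delicate step is Step 2: one must take a genuine basic feasible solution of the $s$-space LP (not an arbitrary optimum of $\Plp$) and count degrees of freedom carefully to land on the factor $m$ rather than $m+1$; compactness of the $X_i$ is used twice, both to guarantee the vertex parametrization of $\conv(X_i)$ and to make each $\gamma_i$ finite.
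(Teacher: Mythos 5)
Your proof is correct. The paper does not prove Theorem \ref{thm:bound_on_duality_gap} itself (it defers to \cite{ekeland} and \cite{bertsekas_cvx_multiplier}), but your argument is the standard one, and your Step 2 is precisely the simplex-tableau refinement the paper alludes to after the theorem and deploys in step (c) of its proof of Theorem \ref{thm_1}: a basic feasible solution of the vertex-weight LP has at most $|I|+m$ positive components, so at most $m$ subsystems are fractional, which yields the factor $m$ rather than the $m+1$ of the raw Shapley--Folkman--Starr bound. The rounding step correctly uses the hypothesis to preserve feasibility of the coupling constraint and charges each of the at most $m$ modified subsystems a cost of at most $\gamma_i$.
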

In consideration of Theorem \ref{thm:bound_on_duality_gap}, let $|I|$ increase, while $m$ remains constant and the sets $\left\{X_i \right\}_{i \in I}$ are uniformly bounded. If $\JP$ increases linearly with $|I|$, then 
%, i.e., there exist an $\alpha > 0$ such that $|\JP| \ge \alpha |I|$, then
\begin{equation}
\label{eq:vanishing_duality_gap}
	\frac{J_{\mathcal{P}}^\star - J_{\mathcal{D}}^\star}{J_{\mathcal{P}}^\star} \rightarrow 0 \quad \text{as} \quad |I| \rightarrow \infty.
\end{equation}
%Theorem \ref{thm:bound_on_duality_gap} implies that, if $J_{\mathcal{P}}^\star$ increases linearly with $|I|$, as is often the case in practice, then
%\begin{equation}
%	\frac{J_{\mathcal{P}}^\star - J_{\mathcal{D}}^\star}{J_{\mathcal{P}}^\star} \rightarrow 0 \quad \text{as} \quad |I| \rightarrow \infty.
%\end{equation}
An early proof of this result appears in \cite{ekeland}, while a more recent version is in \cite[Prop.\ 5.26, p.\ 374]{bertsekas_cvx_multiplier}. The same result also holds for more general problems; see \cite[Prop.\ 5.7.4, p.\ 223]{bertsekas_cvx_theory}.

Note that while Theorem \ref{thm:bound_on_duality_gap} ensures the existence of a primal feasible solution satisfying the performance bound \eqref{eq:duality_gap_bound}, it does not provide an algorithmic way to produce it.  Furthermore, the assumption required by Theorem \ref{thm:bound_on_duality_gap} is restrictive; an example that does not fulfil this assumption is discussed in Section \ref{sec:4_example}, see Remark \ref{remark:old_result_not_ok_with_EV}. In this work we lift this assumption, at the cost of conservatism and thus performance of the solutions recovered.

%\begin{Remark}
%The (m+1) value in \eqref{eq:duality_gap_bound_1} can be decreased to m, see TOT.
%\robincom{can't find appropriate ref... If I dont find it I will remove remark}
%\end{Remark}

%======================================================================================================
% Subsection 2.1
%======================================================================================================

%\subsection{Relation between $\xslp$ and $\xls$}
\subsection{Geometric Properties of the Inner Solutions $\xls$}
\label{sec:subsec_2_1}

%\robincom{
%	PPLP explains what ''sensitivity'' interpretation the dual multipliers for the integer programs carry: they are the sensitivities of the LP \ref{eq:P_lp}, which for the structure studied here is very close (we claim) to \ref{eq:P}. This is by far not always the case. NO DO NOT PUT THIS: In particular it is generally confusing/questionable to use them as shadow prices (OR PHEV GUY)
%}

Here we present a new connection between the inner solutions $\xls$ and the optimizers of the following optimization program
\begin{equation}
\left\{ 
\begin{array}{lll}
	\underset{x}{\text{minimize}} & \sum\limits_{i \in I} c_i\tr x_i\\
	\text{subject to} & \sum\limits_{i \in I} H_i x_i \preceq b\\
	& x_i \in \conv(X_i) &  \quad \forall i \in I,
\end{array}
\right.
\label{eq:P_lp}
\tag{$\mathcal{P}_{\mathrm{LP}}$}
\end{equation}
which amounts to a linear program. We denote by $\JPLP$ its optimal value, and by $\xslp$ one of its optimizers. The relaxation \ref{eq:P_lp} plays a central role in Lagrangian duality for mixed integer programs;
%\robincom{here}
%is the \emph{convex} optimization problem
it is in fact well known that \ref{eq:P_lp} satisfies the (non-obvious) relation $\JPLP = \JD$ \cite[Thm. 1b, p.87]{geoffrion_74}. Accordingly, \ref{eq:P_lp} is often used to gain insight into the strength of the relaxation, i.e., the tightness of the lower bounds to $\JP$ provided by the Lagrangian dual. While in most practical cases one cannot solve \ref{eq:P_lp} directly since an explicit description of the polyhedral sets $\conv(X_i)$ is required, column generation techniques construct approximations of \ref{eq:P_lp} \cite{branch_and_price,desrosiers_col_gen_primer,vanderbeck_col_gen}. It must be further emphasized that even though \ref{eq:P_lp} is a relaxation of \ref{eq:P} and is a linear program, it does \textit{not} coincide with the standard linear relaxation in which the integrality constraints on the discrete variables are relaxed to intervals. In fact, \ref{eq:P_lp} is usually tighter; see \cite[Thm. 1a]{geoffrion_74}. 

%\ref{eq:P_lp} is used to assess the strength of the relaxation, i.e., the tighteness of the lower bounds to $\JP$ provided by the Lagrangian dual. 
%In its extended form it also plays a central role in the context of column generation, where it is named the \textit{master problem} (see, e.g., \cite{vanderbeck_wolsey}). 
%\robincom{leave/remove/explain what extended form is?}
%Note further that in most practical cases we do not have an explicit description of \ref{eq:P_lp} because we do not have access to an explicit description of the polyhedral sets $\conv(X_i)$. 

%It is a well-known fact that $\JPLP = \JD$ \cite[Thm. 1b, p.87]{geoffrion_74}, 
%and that the set of optimal dual solutions to \ref{eq:P_lp} coincides with the set of optimal solutions to \ref{eq:D} \cite[p.89]{geoffrion_74}. Additionally, 

In consideration of the Shapley--Folkman--Starr theorem \cite[p.233]{ekeland}, one can expect the vertices of the convexified problem \ref{eq:P_lp} to have ``structure'', i.e.\ for $\xislp$ to belong to $X_i$ for at least $|I|- m -1$ subproblems, and $\xislp \in \conv(X_i)$ for the remaining $m+1$ ones, see \cite[Thm. 1]{my_supply_chains}. This number can be improved to $|I|-m$ using an argument based on simplex tableaus instead of the Shapley--Folkman--Starr theorem. We use this tighter version here, and in the following new result, the crucial technical theorem of the paper, we extend it by establishing that the subproblems for which $\xislp \in X_i$ also ``freeze'' the corresponding inner solutions $\xis$.

\begin{As}[uniqueness for \ref{eq:P_lp} and \ref{eq:D}]
\label{assumption:uniqueness}
The programs \ref{eq:P_lp} and \ref{eq:D} have unique solutions $\xslp$ and $\ls$, respectively.
\end{As}

\begin{Theorem}[relation between $\xslp$ and $\xs$]
\label{thm:x_lp-x_lambda}
Under Assumption \ref{assumption:uniqueness}, the solutions $\xslp$ and $\xs$ differ in at most $m$ subproblem components, for any selection of $\xs \in \mxs$. That is, for all $\xs \in \mxs$ there exists $I_1 \subseteq I$, with $|I_1| \geq |I|-m$, such that $\xis = \xislp$.
\label{thm_1}
\end{Theorem}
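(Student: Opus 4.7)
My plan is to use LP duality to identify, for each subsystem $i$, a common face $F_i \subseteq \conv(X_i)$ containing both $(\xslp)_i$ and every vertex solution in $\mxis$, and then to invoke Assumption \ref{assumption:uniqueness} to force $F_i$ to be a singleton for all but at most $m$ indices. Defining $F_i \doteq \arg\min_{x_i \in \conv(X_i)}(c_i + H_i\tr\ls)\tr x_i$, the identity $\JPLP = \JD$ makes $\ls$ a KKT multiplier for \ref{eq:P_lp}, so separability of the Lagrangian gives $(\xslp)_i \in F_i$. Because $F_i$ is a face of $\conv(X_i)$, its vertices are exactly those vertices of $\conv(X_i)$ that attain the inner optimum; by Definition \ref{definition_solns_to_inner_problem} this means $\vert(F_i) = \mxis$, hence $F_i = \conv(\mxis)$. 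In particular $\dim F_i = 0$ iff $|\mxis|=1$, in which case the unique element must equal $(\xslp)_i$. So the theorem reduces to proving $\sum_{i \in I} \dim F_i \leq m$.

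For this dimension bound I would study the set of \ref{eq:P_lp} optimizers. A direct Lagrangian identity shows that whenever $x_i \in F_i$ for every $i$ and $\sum_i H_i x_i \preceq b$, one has $\sum_i c_i\tr x_i = \JD + \ls\tr(b - \sum_i H_i x_i)$, so the LP optimal set coincides with
\begin{equation*}
F^\star \doteq \Big\{ x \in \prod_{i \in I} F_i \ \Big|\ \sum_{i \in I} H_i x_i \preceq b,\ \ls\tr\big(b - \textstyle\sum_{i \in I} H_i x_i\big) = 0 \Big\}.
\end{equation*}
Uniqueness of $\xslp$ gives $F^\star = \{\xslp\}$, so $\dim F^\star = 0$. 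The crux is then that $F^\star$ is cut out from the polytope $\prod_i F_i$ (of dimension $\sum_i \dim F_i$) by the $|\mathrm{supp}(\ls)|$ complementary-slackness equalities together with the coupling inequalities active at $\xslp$, a family of at most $m$ affine constraints in total; since each such constraint reduces polyhedral dimension by at most one, $\sum_i \dim F_i \leq m$. Setting $I_1 \doteq \{i \in I : \dim F_i = 0\}$ then yields $|I_1| \geq |I| - m$ and forces $\xis = (\xslp)_i$ on $I_1$ for every selection $\xs \in \mxs$.

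The step I expect to be most delicate is this final dimension count: one must verify that the complementary-slackness equalities and the active coupling inequalities really do contribute only $m$ independent affine cuts to $\prod_i F_i$, and in particular that inequalities do not, by accident, slice the product face by more than one dimension each. It is precisely here that uniqueness of $\xslp$, rather than only of $\ls$, becomes essential, since it is what collapses $F^\star$ to a single point and thereby transfers the bound into a constraint on $\sum_i \dim F_i$.
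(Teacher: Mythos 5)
Your setup is sound and, in its first half, essentially parallels what the paper establishes: the identification $F_i=\conv(\mxis)$, the fact that $\xislp\in F_i$, and the characterization of the optimal set of \ref{eq:P_lp} as your $F^\star$ are all correct, and the theorem does reduce to showing that at most $m$ of the faces $F_i$ fail to be singletons. The gap is in the final dimension count. The assertion that each of the $m$ affine constraints cutting $F^\star$ out of $\prod_i F_i$ ``reduces polyhedral dimension by at most one'' is false for polytopes: a single hyperplane or halfspace tangent to a polytope at a low-dimensional face can collapse its dimension arbitrarily (intersect the unit square with $\{x_1+x_2\le 0\}$: one constraint, and the dimension drops from $2$ to $0$). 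The bound $\dim(P\cap H)\ge \dim P-1$ holds for affine subspaces, not polytopes, and the tangency scenario is exactly what occurs here: since $\xslp$ is a vertex of the feasible set of \ref{eq:P_lp}, many facets of $\prod_i F_i$ are active at it, and it is those facets --- not only the $m$ coupling rows --- that account for the drop to dimension zero. Carrying the count out correctly (rank of active constraints at a vertex) yields only $\sum_i \dim G_i\le m$, where $G_i$ is the \emph{minimal} face of $\conv(X_i)$ containing $\xislp$; this bounds the number of indices with $\xislp\notin\vert(X_i)$, but says nothing about the full argmin faces $F_i$ at the remaining indices, which is what you must control to handle \emph{every} selection from $\mxis$.

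The missing ingredient is strict complementarity. The paper works in the extended vertex-weight formulation \ref{eq:PP_lp}/\ref{eq:D_lp}: a support/basis argument at a vertex gives $|I_1|\ge|I|-m$ with $\xislp\in\vert(X_i)$ for $i\in I_1$, and then a strictly complementary optimal pair $(\ps,\sss)$ --- whose existence is guaranteed for LPs, and which is usable here because uniqueness of $\ls$ makes the dual slacks $\sss$ unique --- shows that for $i\in I_1$ the forced weight $\pijsh=1$ implies $\sijs>0$ for all $j\ne\hat\jmath$, i.e.\ the inner problem has a \emph{unique} vertex minimizer, so $\mxis=\{\xislp\}$ and $\dim F_i=0$. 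Without this step you cannot exclude that $\mxis$ contains extra vertices at indices where $\xislp$ is itself a vertex of $\conv(X_i)$; an adversarial selection would then differ from $\xslp$ in more than $m$ components, and the second half of Example \ref{example:example_1} shows this failure mode is real once uniqueness is dropped. Your intermediate claim $\sum_i\dim F_i\le m$ is in fact true under Assumption \ref{assumption:uniqueness}, but establishing it already requires the strict complementarity argument (it is what guarantees $\xislp\in\mathrm{relint}(F_i)$, so that $G_i=F_i$); it does not follow from uniqueness of $\xslp$ and naive constraint counting alone. Note also that, contrary to your closing remark, uniqueness of $\ls$ is not merely bookkeeping here --- it is precisely what makes the strict complementarity step available.
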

\begin{proof}
See Appendix \ref{sec:proof_connectedness}.
\end{proof}

%\pgcom{ FIXED. What does `essential an LP' mean?  It either is or it is not. If you are to include comments to this effect, I would include them between the assumption and the theorem.  The argument about `negligible perturbations is also not clear.}

Assumption \ref{assumption:uniqueness} concerns two linear programs (see program \ref{eq:D_lp} in Section \ref{sec:proof_connectedness} for the LP version of \ref{eq:D}). Uniqueness of primal and dual optimizers in the linear programming case is discussed in \cite{mangasarian}, where necessary and sufficient conditions are provided. There are degenerate cases in which this assumption may fail, in particular when the problem's data is affected by a high degree of symmetry. 
%We discuss this issue in Section \ref{sec:4_example}, as it affects convergence speed in our numerical experiments. There we also provide a remedy that is straightforward to implement.
These cases, however, can always be avoided by adding negligible perturbations to the cost and resource vectors.

Furthermore, note that while the structural properties of $\xslp$ appeared in the literature \cite{bertsekas_UC,bertsekas_cvx_theory,my_supply_chains}, the contribution here is to ensure that, under Assumption \ref{assumption:uniqueness}, these advantageous properties are transferred to the inner solutions $\xs$. This is of substantial practical interest, because it is the inner solutions that one has direct access to when solving the dual. In the following we provide an analytical example that further illustrates the significance of Theorem \ref{thm:x_lp-x_lambda}. It also includes a counterexample, showing how the desired assertion may fail in absence of Assumption \ref{assumption:uniqueness}.

\begin{Example}
\label{example:example_1}
Suppose we have to
\begin{equation}
\left\{ \begin{array}{lll}
	%\min\limits_{\begin{subarray}{c}x_1 \in X_1 \\ x_2 \in X_2 \end{subarray}} & 3x_1 + x_2\\
	\text{minimize} & \sum_{i=1}^4 c_i x_i\\
	\text{s.t.} & \sum_{i=1}^4 H_i x_i \leq 11.1\\
	& x_i \in X_i & i = 1,\dots,4
\end{array}\right.
\label{example_small_problem_2}
\end{equation}
and $X_i = \left\{ x \in \mathbb{Z}_+^2 \left| \  A_ix_i \leq d_i \right.\right\}$ with
\begin{equation*}
\begin{array}{llll}
	A_1 = \begin{bmatrix} 0 & 1\\ 1 & 1 \end{bmatrix} & d_1 = \begin{bmatrix} 1.2\\ 2.1\end{bmatrix} & c_1 = [1,1] & H_1 = [1,1]
	\vspace{0.5mm}\\
	A_2 = \begin{bmatrix} 0 & 1\\ 1 & 0 \end{bmatrix} & d_2 = \begin{bmatrix} 0.6\\ 2.1\end{bmatrix} & c_2 = [-2,1] & H_2 = [5,1]
	\vspace{0.5mm}\\
	A_3 = \begin{bmatrix} 1 & 0\\ -0.5 & 1 \end{bmatrix} & d_3 = \begin{bmatrix} 2.2\\ 1.1\end{bmatrix} & c_3 = [0.5,-1] & H_3 = [1,1]
	\vspace{0.5mm}\\
	A_4 = \begin{bmatrix} 1 & 0\\ 0 & 1 \end{bmatrix} & d_4 = \begin{bmatrix} 1.2\\ 2\end{bmatrix} & c_4 = [-3,0.5] & H_4 = [1,1],
\end{array}
\end{equation*}
see Figure \ref{fig:appendix_example_1}. Relaxing the constraint $\sum_{i= 1}^4 H_i x_i \leq 11.1$ in this problem leads to the dual function
\begin{figure}
	\centering
	\includegraphics[width=1\linewidth]{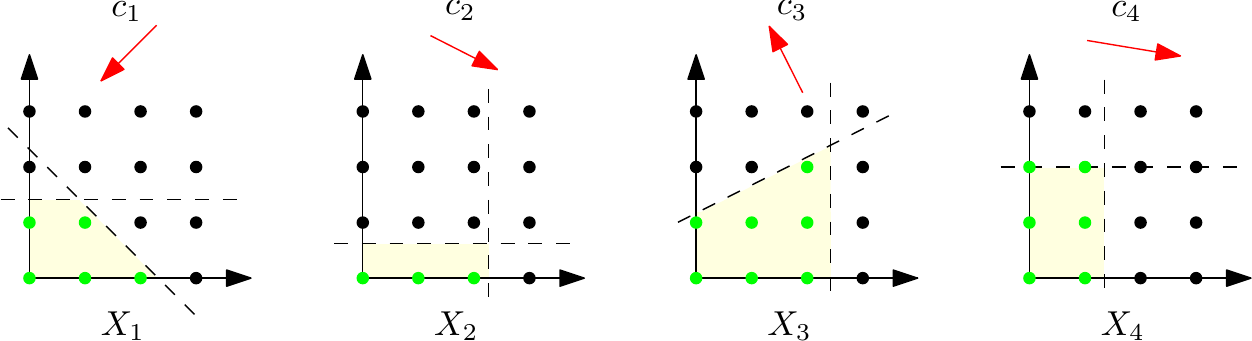}
	\caption{Illustration of the sets $X_i$ in Example \ref{example:example_1}.}
	\label{fig:appendix_example_1}
\end{figure}
\begin{equation*}
	d(\lambda) = \left\{\begin{array}{ll} 
	-8 + 0.9 \lambda \quad & 0 \leq \lambda \leq 2/5\\ 
  - 4 - 8.9 \lambda & 2/5 < \lambda \leq 1\\ 
	-3 - 9.9 \lambda & 1 < \lambda \leq 3\\
	- 10.9 \lambda & \lambda > 3,
	\end{array}\right.
\end{equation*}
so that the dual optimizer is $\lambda^* = 2/5$, and $d(\ls) = \JD = -7.64$, while the primal optimal objective is $\JP = -7$ (note the duality gap). The corresponding sets of inner solutions are, according to \eqref{eq:mathcal_xi},
%\begin{eqnarray*}
%	%\mathcal{X}_1(\ls) & = &  \arg\min\limits_{x_1 \in X_1} \left\{ c_1 x_1 + (\ls)\tr H_1 x_1 \right\} = \left\{ \begin{bmatrix} 0\\0\end{bmatrix} \right\}\\
%	\mathcal{X}_1(\ls) & = & \left\{ \begin{bmatrix} 0\\0\end{bmatrix} \right\}\\
%	\mathcal{X}_2(\ls) & = & \left\{ \begin{bmatrix} 0\\0\end{bmatrix},\begin{bmatrix} 1\\0\end{bmatrix},\begin{bmatrix} 2\\0\end{bmatrix} \right\}\\
%	\mathcal{X}_3(\ls) & = & \left\{ \begin{bmatrix} 0\\1\end{bmatrix} \right\}\\
%	\mathcal{X}_4(\ls) & = & \left\{ \begin{bmatrix} 1\\0\end{bmatrix} \right\}.
%\end{eqnarray*}
\begin{equation*}
\begin{array}{l}
	%\mathcal{X}_1(\ls) & = &  \arg\min\limits_{x_1 \in X_1} \left\{ c_1 x_1 + (\ls)\tr H_1 x_1 \right\} = \left\{ \begin{bmatrix} 0\\0\end{bmatrix} \right\}\\
	\mathcal{X}_1(\ls) = \left\{ \begin{bmatrix} 0\\0\end{bmatrix} \right\} \quad
	\mathcal{X}_2(\ls) = \left\{ \begin{bmatrix} 0\\0\end{bmatrix},
	%\begin{bmatrix} 1\\0\end{bmatrix},
	\begin{bmatrix} 2\\0\end{bmatrix} \right\} 
	\\
	\vspace{-3.5mm}\\
	\mathcal{X}_3(\ls) = \left\{ \begin{bmatrix} 0\\1\end{bmatrix} \right\} \quad
	\mathcal{X}_4(\ls) = \left\{ \begin{bmatrix} 1\\0\end{bmatrix} \right\}.
\end{array}
\end{equation*}
On the other hand, $\xslp$ is unique and is given by
\begin{equation*}
\begin{array}{c}
	(\xslp)_1 = \begin{bmatrix} 0\\0\end{bmatrix} \quad (\xslp)_2 = \begin{bmatrix} 1.82\\0\end{bmatrix} \quad
	(\xslp)_3 = \begin{bmatrix} 0\\1\end{bmatrix} \quad (\xslp)_4 = \begin{bmatrix} 1\\0\end{bmatrix}.
\end{array}
\end{equation*}
Notice how the relationship $\xis = \xislp$ holds for $i \in \left\{1,3,4\right\} = I_1$, and that the cardinality of $|I_1|$ satisfies $|I_1| \geq |I| - m = 4 - 1 = 3$. The validity of Theorem \ref{thm:x_lp-x_lambda} is thus verified.

On the other hand, to see how the Theorem may fail in absence of Assumption \ref{assumption:uniqueness}, consider again problem \eqref{example_small_problem_2}, but now with $b = 6$ and with the subsystems determined by
\begin{eqnarray*}
\begin{array}{llll}
	A_i = \begin{bmatrix} 1 & 0\\ 0 & 1 \end{bmatrix} \quad d _i = \begin{bmatrix} 3.2\\ 1.4\end{bmatrix} \quad c_i = [-1,1] \quad H_i = [1,1]
\end{array}
\end{eqnarray*}
for $i= 1,\dots,4$.
Notice that all the subsystems are identical, hence the problem is highly symmetric. The dual function in this case is
\begin{equation*}
	d(\lambda) = \left\{\begin{array}{ll} 
	-12 + 6\lambda & 0 \leq \lambda \leq 1\\ 
  	-6 \lambda & \lambda > 1,
	\end{array}\right. 
\end{equation*}
and the unique dual optimizer is $\ls = 1$. However, $\xslp$ is not unique. For example
\begin{equation*}
	(\xslpb)_1 = \begin{bmatrix} 0.7\\0\end{bmatrix} \quad (\xslpb)_2 = \begin{bmatrix} 1.6\\0\end{bmatrix} \quad (\xslpb)_3 = \begin{bmatrix} 0.6\\0\end{bmatrix} \quad (\xslpb)_4 = \begin{bmatrix} 3.1\\0\end{bmatrix}\\
\end{equation*}
and
\begin{equation*}
	(\xslpb)_1 = \begin{bmatrix} 2\\0\end{bmatrix} \quad (\xslpb)_2 = \begin{bmatrix} 2\\0\end{bmatrix} \quad (\xslpb)_3 = \begin{bmatrix} 2\\0\end{bmatrix} \quad (\xslpb)_4 = \begin{bmatrix} 0\\0\end{bmatrix}\\
\end{equation*}
are both valid optimizers of $\mathcal{P}_{\mathrm{LP}}$. Assumption \ref{assumption:uniqueness} is therefore not fulfilled. The sets of inner solutions are
\begin{equation*}
	\mathcal{X}_i(\lsb) = \left\{ \begin{bmatrix} 0\\0\end{bmatrix},
	%\begin{bmatrix} 1\\0\end{bmatrix}, \begin{bmatrix} 2\\0\end{bmatrix},
	\begin{bmatrix} 3\\0\end{bmatrix}  \right\}, \quad i = 1,\dots,4,
\end{equation*}
and the relationship of Theorem \ref{thm:x_lp-x_lambda} is violated.

\end{Example}

\begin{Remark}[nonlinear extension]
%Notice further that in light of \cite[Sec.\ 5.7]{bertsekas_cvx_theory}, all the results derived for \ref{eq:P}, i.e., %, the contraction method \ref{theorem_main_tight} and the performance bound \ref{thm:solutions_quality}, 
Theorem \ref{thm_1} holds even when the objective and the coupling constraints functions are \textit{concave}. This is immediate by noticing that, in either case, local solutions are found at the vertices of $X_i$, according to a more general version of the Fundamental Theorem of Linear Programming, see \cite[Prop.\ 2.4.2]{bertsekas_cvx_theory}. The passage \eqref{eq:min} in the proof of Lemma \ref{prop:there_are_vertex_solns} remains unchanged, and the proof of Theorem \ref{thm_1} follows verbatim. 
\end{Remark}

%\robincom{
%Maybe remove the following, because for the case in which conv Xi is known, duality is not useful, as you can get exactly the same solution directly from the linear relaxation (maybe mention this):
%For programs affected by the so-called ``integrality property'', i.e., for which an explicit description of $\conv(X_i)$ is given, standard LP tools \cite{mangasarian} can be used to easily check a-priori whether this condition holds. Examples of such programs are the knapsack problem, including some of its variants (see, e.g., [CITE]), and the capacitated facility problem \cite{guignard}. }

%\robincom{
%More importantly, this requirement indicates a practical way to get well-conditioned problems. In our experiments on practical large mixed integer optimization problem instances, such as the one reported in Section \ref{sec:4_example}, we find that, in order to ensure the uniqueness assumption, adding a small perturbation $\delta_i$ to the cost function substantially enhances dual convergence and convergence of inner solutions to ``good'' solutions satisfying Theorem \ref{thm_1}. An additive disturbance $\|\delta_i\|_2$ that is 2--3 order of magnitude smaller than $\|c_i\tr\|_2$ is enough to make dual subgradient method practicable. The addition of this perturbation term is particularly beneficial when the problem presents a high degree of symmetry, i.e., when the subsystems $X_i$ and the cost coefficients $c_i\tr$ are very similar to each other.
%}

\section{A Distributed Solution Method for \ref{eq:P}}
\label{sec:3_robust_feasible_soln}
% !TEX root = revision_example.tex

Informally speaking, Theorem \ref{thm_1} says that the inner solutions $\xls$ nearly coincide with those of $\xslp$, with the cardinality of their difference bounded by $m$, i.e., the dimension of the coupling constraint. Since $\xslp$ is feasible with respect to the coupling constraints and attains a better objective than $\JP$, one can expect the solutions obtained from solving the dual to be nearly feasible and to attain good objective values. In this section we exploit this result to propose a method aimed at obtaining ``good'' feasible solutions to problem \ref{eq:P} in a distributed fashion. 

%In this section we discuss our proposed methodology along with theoretical bounds for the performance of the solution method. Subsequently, exploiting further the structure of the problem, we discuss how the proposed bounds can be improved. Finally, we close the section with further discussion on some of the required assumptions, which formally justifies why the proposed technique may promise reasonable performance even in the absence of the required assumptions. 
%\robincom{dont like it}

\subsection{Contraction of the Resources}

%\pgcom{FIXED. It would be nice here to give some overview of the intent.  In other words, you plan to shrink the constraints by an amount determined by exploiting the results of the previous section.  The strategy is then to round the resulting solutions generated by the relaxations, in the hope that the small amount of rounding required is not enough to violate the real constraints.}

Our proposed method is to contract the resources vector $b$ by an appropriate amount, which is determined by the results of the previous section. We show that any inner solution recovered at the dual optimum $\ls$ of the contracted problem is a feasible solution for \ref{eq:P}. We also provide a performance bound for these solutions, which indicates that their quality improves with increasing problem size. 

Consider the following modified version of problem \ref{eq:P}
\begin{equation}
\left\{ 
\begin{array}{lll}
	\text{minimize} & \sum\limits_{i \in I} c_i\tr x_i\\
	\text{subject to} & \sum\limits_{i \in I} H_i x_i \preceq \bar{b}\\
	& x_i \in X_i &  \forall i \in I.
\end{array}
\right.
\tag{$\overline{\mathcal{P}}$}
\label{eq:Pbar}
\end{equation}
The resource vector $b$ has been contracted to $\bar{b} \eqdef b - \rho$, where the $k$-th element of the contraction $\rho \in \mathbb{R}^m$ is given by
\begin{equation}
  \label{rho} 
  \rho^k =  m \cdot \underset{i \in I}{\max} \left( \underset{x_i \in X_i}{\max} H^k_i x_i - \underset{x_i \in X_i}{\min} H^k_i x_i \right),
\end{equation}
where $H_i^k$ is the $k$-th row of $H_i$. Correspondingly, we introduce the problems $\overline{\mathcal{P}}_{\mathrm{LP}}$ and $\overline{\mathcal{D}}$, defined similarly to \ref{eq:P_lp} and \ref{eq:D}, replacing the resource vector $b$ with $\bar{b}$. We next establish that the primal solutions recovered from the dual of \ref{eq:Pbar} are feasible for \ref{eq:P}.
%We also show that the quality of these solutions increases as the problem becomes larger.
\begin{Theorem}[feasible solutions]
%If \ref{eq:Pbar} is feasible, and 
If Assumption \ref{assumption:uniqueness} holds for the programs $\overline{\mathcal{P}}_{\mathrm{LP}}$ and $\overline{\mathcal{D}}$, then any selection $\xsb \in \mxsb$ is feasible for \ref{eq:P}, where $\lsb$ is the optimal solution of $\overline{\mathcal{D}}$.
\label{theorem_main_tight}
\end{Theorem}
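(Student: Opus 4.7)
The plan is to apply Theorem \ref{thm:x_lp-x_lambda} to the contracted pair $\overline{\mathcal{P}}_{\mathrm{LP}}$ and $\Dbar$, and then use the contraction \eqref{rho} to absorb the mismatch between the inner solution and the convexified optimizer on the (at most $m$) components where they differ. Specifically, let $\xsb \in \mxsb$ be an inner solution of $\Dbar$ and let $\xslpb$ be the unique optimizer of $\overline{\mathcal{P}}_{\mathrm{LP}}$. By Theorem \ref{thm:x_lp-x_lambda} applied to the contracted problems, there exists $I_1 \subseteq I$ with $|I_1| \geq |I|-m$ such that $\xisb = (\xslpb)_i$ for all $i \in I_1$; denote $I_2 \eqdef I \setminus I_1$, so $|I_2| \leq m$.

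Next I would check feasibility row by row. Fix a row index $k \in \{1,\dots,m\}$. Splitting the sum along $I_1$ and $I_2$ and adding and subtracting the $\xslpb$ contribution on $I_2$ yields
\begin{equation*}
\sum_{i \in I} H_i^k \xisb \;=\; \sum_{i \in I} H_i^k (\xslpb)_i \;+\; \sum_{i \in I_2} H_i^k \bigl( \xisb - (\xslpb)_i \bigr).
\end{equation*}
Primal feasibility of $\xslpb$ for $\overline{\mathcal{P}}_{\mathrm{LP}}$ bounds the first term by $\bar b^k = b^k - \rho^k$. For the second term, observe that $\xisb \in X_i$ and $(\xslpb)_i \in \conv(X_i)$, so both $H_i^k \xisb$ and $H_i^k (\xslpb)_i$ lie in the interval $[\min_{x_i \in X_i} H_i^k x_i,\, \max_{x_i \in X_i} H_i^k x_i]$, because a linear function attains its extrema over a polytope at its vertices. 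Hence each term in the $I_2$-sum is bounded by $\max_{x_i \in X_i} H_i^k x_i - \min_{x_i \in X_i} H_i^k x_i$, and since $|I_2| \leq m$,
\begin{equation*}
\sum_{i \in I_2} H_i^k \bigl( \xisb - (\xslpb)_i \bigr) \;\leq\; m \cdot \underset{i \in I}{\max} \Bigl( \underset{x_i \in X_i}{\max} H_i^k x_i - \underset{x_i \in X_i}{\min} H_i^k x_i \Bigr) \;=\; \rho^k.
\end{equation*}
Combining the two bounds gives $\sum_{i \in I} H_i^k \xisb \leq \bar b^k + \rho^k = b^k$. Since this holds for every row $k$, and each $\xisb$ lies in $X_i$ by construction, $\xsb$ is feasible for \ref{eq:P}.

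The only genuinely delicate point is to ensure that Theorem \ref{thm:x_lp-x_lambda} is applicable to the contracted pair: this is exactly why the hypothesis explicitly assumes Assumption \ref{assumption:uniqueness} for $\overline{\mathcal{P}}_{\mathrm{LP}}$ and $\Dbar$ rather than for the original pair. Everything else is a clean pigeonhole-style estimate: the contraction $\rho$ is calibrated precisely so that the worst-case per-subsystem discrepancy in each row of $H$, summed over the at most $m$ ``unfrozen'' subsystems, is absorbed. No new combinatorial argument is needed beyond the one already carried out in the proof of Theorem \ref{thm:x_lp-x_lambda}.
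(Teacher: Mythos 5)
Your proof is correct and follows essentially the same route as the paper's: apply Theorem \ref{thm:x_lp-x_lambda} to the contracted pair, split the coupling sum over $I_1$ and $I_2$, bound the $I_1$ part via feasibility of $\xslpb$ for $\overline{\mathcal{P}}_{\mathrm{LP}}$, and absorb the at most $m$ discrepancies with $\rho$. Your explicit justification that $H_i^k(\xslpb)_i$ lies in the same interval as the vertex values (since a linear functional's extrema over $\conv(X_i)$ are attained on $\vert(X_i) \subseteq X_i$) is a detail the paper leaves implicit, but the argument is the same.
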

\begin{proof}
See Appendix \ref{sec:proof_contraction_works}.
\end{proof}
The method is easy to implement because the amount of contraction required usually necessitates only simple computations\footnote{Dual methods are most useful when the computation of the inner solutions is substantially easier than the coupled system. 
%This is the case when the minimizations of the separated problem are computationally inexpensive. 
To compute the contraction, however, we have to perform \textit{maximizations} of the form $\max_{x_i \in X_i} H^k_i x_i$, which for mixed integer problems are not necessarily as easy as minimizations over the same feasible set.}, and these can be carried out in a distributed fashion. Furthermore, for the solution of the dual problem well established methods exist (e.g., the subgradient method) and they can be directly applied here. 

The critical assumption of Theorem \ref{theorem_main_tight} is that the resources available should be sufficiently abundant, such that the problem remains feasible after the contraction has been applied. In Section \ref{sec:reducing_cons} we discuss practical cases in which it is possible to safely decrease the necessary resource reduction.

In the next Theorem we assess the performance of the solutions $\xsb$. In order to obtain an explicit bound, we first make the following assumption.
%\pgcom{FIXED. Is the statement below about the increase in multipliers always true?  If so, is it obviously that it is always true? I do not think there is presently sufficient context for the price interpretation to make sense.  I also think that this `market participants' interpretation (which I like) should be deferred to a discussion a bit later on.}
%In the next theorem we show that the primal solutions recovered also become closer to optimal the larger the problem gets. 

\begin{As}[Slater point with increasing slack]
\label{ass:slater_point}
%The program \ref{eq:Pbar} has a slater point whose slack increases linearily with $|I|$. That is, t
There exist $\zeta > 0 $ and $\wt x_i \in \conv(X_i)$ for all $i \in I$ such that 
\begin{equation}
\label{eq:slater_point_has_increasing_slask}
	\sum\limits_{i \in I} H_i \wt x_i \preceq \bar{b} - \zeta |I| \ind.
\end{equation}
\end{As}
%\pgcom{FIXED. The meaning of this assumption is not really explained.  It would be good to insert some comment here.}
%Assumption \ref{ass:slater_point} requires a Slater point whose slack increases linearly with the problem size. Under this assumption we can provide the following explicit performance bound for the optimality gap. 
\begin{Theorem}[performance guarantee]
\label{thm:solutions_quality}
Suppose that the programs $\overline{\mathcal{P}}_{\mathrm{LP}}$ and $\overline{\mathcal{D}}$ satisfy Assumption \ref{assumption:uniqueness} and Assumption \ref{ass:slater_point} holds. Then any solution $\xsb \in \mxsb$ recovered satisfies
\begin{equation}
	J_{\mathcal{P}}(\xsb) - \JP \leq (m + \|\rho\|_\infty/\zeta) \cdot \max_{i \in I} \gamma_i,
\label{eq:performance_bound}
\end{equation}
where $\gamma_i$ and $\rho$ are as defined in \eqref{eq:duality_gap_bound} and \eqref{rho}, respectively. %In particular, if $\lim\limits_{|I| \ra \infty} \max\limits_{i\in I} \gamma_i < \infty $ and $\JP$ increases linearly with $|I|$, i.e., there exist $\alpha > 0$ such that $|\JP| \ge \alpha |I|$, then
%	\begin{equation}
%	\label{eq:vanishing_opt_gap}
%		\frac{J(\xsb) - \JP}{\JP} \rightarrow 0 \quad \text{as} \quad |I| \rightarrow \infty.
%	\end{equation} 
\end{Theorem}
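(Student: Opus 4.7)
The plan is to split the suboptimality gap into three terms that can each be controlled, with Theorem \ref{thm:x_lp-x_lambda} handling the combinatorial piece and Assumption \ref{ass:slater_point} providing a Slater-type bound on the dual multiplier. By Theorem \ref{theorem_main_tight}, $\xsb$ is feasible for \ref{eq:P}, so I decompose
$$
J_{\mathcal{P}}(\xsb) - \JP \;=\; \bigl[J_{\mathcal{P}}(\xsb) - J_{\Plpbar}^\star\bigr] + \bigl[J_{\Plpbar}^\star - \JPLP\bigr] + \bigl[\JPLP - \JP\bigr].
$$
The third bracket is $\leq 0$ because \ref{eq:P_lp} is a convex relaxation of \ref{eq:P}, and can be discarded.

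For the first bracket I apply Theorem \ref{thm:x_lp-x_lambda} to the contracted triple $(\Pbar, \Plpbar, \Dbar)$, which is legitimate because the uniqueness hypothesis is assumed for $\Plpbar$ and $\Dbar$. This yields $I_1 \subseteq I$ with $|I_1| \geq |I| - m$ and $(\xsb)_i = (\xslpb)_i$ for all $i \in I_1$. Since $c_i\tr$ is linear, its extrema over $\conv(X_i)$ are attained at vertices, so that $|c_i\tr u - c_i\tr v| \leq \gamma_i$ for any $u, v \in \conv(X_i)$. Summing the non-vanishing differences only over $i \notin I_1$ gives $J_{\mathcal{P}}(\xsb) - J_{\Plpbar}^\star \leq m\, \max_{i \in I} \gamma_i$. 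For the second bracket, LP strong duality yields $J_{\Plpbar}^\star = J_{\Dbar}^\star$ and $\JPLP = \JD$; plugging $\lsb$ as a feasible but generally suboptimal point into the dual of \ref{eq:P_lp} (whose objective differs from that of $\Dbar$ only in the resource term by $-\lsb\tr(b - \bar b) = -\lsb\tr \rho$) immediately delivers $J_{\Plpbar}^\star - \JPLP \leq \lsb\tr \rho \leq \|\lsb\|_1\, \|\rho\|_\infty$.

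The main obstacle, which is precisely what Assumption \ref{ass:slater_point} is for, is bounding $\|\lsb\|_1$. Evaluating the Lagrangian of $\Pbar$ at the Slater primal $\wt x$ and the dual optimum $\lsb$, and using $\lsb \succeq 0$ together with $\sum_{i \in I} H_i \wt x_i \preceq \bar b - \zeta |I| \ind$, one finds
$$
J_{\Dbar}^\star \;\leq\; \sum_{i \in I} c_i\tr \wt x_i + \lsb\tr\Bigl(\sum_{i \in I} H_i \wt x_i - \bar b\Bigr) \;\leq\; \sum_{i \in I} c_i\tr \wt x_i - \zeta |I|\, \|\lsb\|_1.
$$
Since $\lambda = 0$ is dual feasible, one also has $J_{\Dbar}^\star \geq \sum_{i \in I} \min_{x_i \in X_i} c_i\tr x_i$; subtracting this lower bound and estimating each $c_i\tr \wt x_i - \min_{x_i \in X_i} c_i\tr x_i$ by $\gamma_i$ gives $\|\lsb\|_1 \leq \max_{i \in I} \gamma_i / \zeta$. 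A subtle point to watch is that $\wt x_i$ lives in $\conv(X_i)$ whereas the inner problem defining $\Dbar$ minimizes over $X_i$; this is harmless because linearity of the Lagrangian in $x$ implies $\min_{x \in X} L(x,\lambda) = \min_{x \in \conv X} L(x,\lambda)$. Combining the three pieces produces the announced bound $(m + \|\rho\|_\infty / \zeta) \max_{i \in I} \gamma_i$.
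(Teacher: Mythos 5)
Your proof is correct and follows essentially the same route as the paper: the identical three-term decomposition, Theorem \ref{thm:x_lp-x_lambda} applied to the contracted problem for the first term, the Slater-based bound $\|\lsb\|_1 \leq \max_i \gamma_i/\zeta$ for the second, and nonpositivity of the relaxation gap for the third. The only difference is cosmetic: where the paper cites a lemma of Nedi\'c--Ozdaglar for the multiplier bound and convex perturbation theory for $J^\star_{\Plpbar} - \JPLP \leq (\lsb)\tr\rho$, you derive both inline via weak duality, which is a self-contained and equally valid presentation.
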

\begin{proof}
See Appendix \ref{sec:proof_quality_of_solns}.
\end{proof}
In view of Theorem \ref{thm:solutions_quality}, if the sets $\{X_i\}_{i \in I}$ are uniformly bounded and $\JP$ grows linearly in terms of $|I|$, then 
	\begin{equation}
	\label{eq:vanishing_opt_gap}
		\frac{J(\xsb) - \JP}{\JP} \rightarrow 0 \quad \text{as} \quad |I| \rightarrow \infty.
	\end{equation} 
Accordingly, the quality of the solutions recovered increases the larger the problem becomes, as the optimality gap decreases at a ``$1/|I|$'' rate. In Section \ref{sec:increasing_margin_discussion} we will discuss Assumption \ref{ass:slater_point} and show this asymptotic behavior can be expected even in the absence of a Slater point.

Theorem \ref{theorem_main_tight} and \ref{thm:solutions_quality} provide a systematic way to produce solutions that are guaranteed to be feasible and that satisfy the performance bound \eqref{eq:performance_bound}. This bound resembles \eqref{eq:duality_gap_bound}, where the additional term ``$\|\rho\|_\infty/\zeta$'' may be viewed as the price to ensure feasibility and to lift the assumption required by Theorem \ref{thm:bound_on_duality_gap}.
%This behavior is similar to the behavior of the duality gap \eqref{eq:vanishing_duality_gap}.

%Notice that reducing the available resources has the general effect of increasing the associated multipliers, i.e., $\lsb \succeq \ls$. In a context in which dual optimizers are interpreted as prices, the contraction can be seen as the necessary price uplift required to ensure that the ``market participants'' converge to a solution that is at least globally feasible, despite the presence of local non-convexities.
%Assumption \ref{ass:slater_point} is discussed in Section \ref{sec:increasing_margin_discussion}, in which we show that a slater point is sufficient to get the behaviour \eqref{eq:vanishing_opt_gap}, without the additional requirement \eqref{eq:slater_point_has_increasing_slask} on its slack. We keep it here in order to get the explicit (and compact) bound for the performance \eqref{eq:performance_bound}.

%It follows that, for problems in which $\JP$ increases linearly with $|I|$, we have
%\begin{eqnarray}
%	\frac{J(\xsb) - \JP}{\JP} \rightarrow 0 \quad \text{as} \quad |I| \rightarrow \infty,
%\end{eqnarray}
%i.e., the quality of the solutions recovered increases the larger the problem becomes, as the optimality gap decreases at a ``$1/|I|$'' rate. 
%\robincom{some comment on the additional assumption that we have a slater point with increasing margin?}
%%In Section \ref{sec:4_example} we verify this result experimentally.

\subsection{Reducing Conservatism}
\label{sec:reducing_cons}

The contraction proposed in Theorem \ref{theorem_main_tight} can be interpreted as a robustification of problem \ref{eq:P} toward alterations of $m$ local solutions $x_i$. In this section we take a closer look at the coupling constraints matrix ${H \eqdef \left[ H_1, H_2, \dots, H_{|I|}\right]}$ and discuss some special cases in which its structure can be exploited to safely reduce the necessary contraction.

%In this section we discuss some special cases of \ref{eq:P} in which it is possible to safely reduce the necessary contraction, thus extending the applicability of the result and increasing the quality of the recovered solutions.

Suppose that the matrix $H$ has block structure, as depicted in Figure \ref{fig:coupling_matrix_block_structure_1}. As illustrated, we introduce the set $I_k$ as the index set of the subsystems contributing to the $k$-th coupling constraint, i.e., for which $H_i^k \neq 0$. We furthermore define the submatrix $[H_i]_{i \in I_k}$, obtained by collecting the columns of $H$ related to the subsystems in $I_k$. 

Such a block structured $H$ may arise in applications in which the resources present a hierarchical structure, or when the optimization is over tree or tree-star networks, as shown on Figure \ref{fig:coupling_matrix_block_structure_2}. In this case, the uniform contraction proposed in Theorem \ref{theorem_main_tight} can be safely reduced.

\begin{figure}
\label{fig:coupling_matrix_block_structure}
\centering
\subfigure[]{							\includegraphics[width=0.52\linewidth]{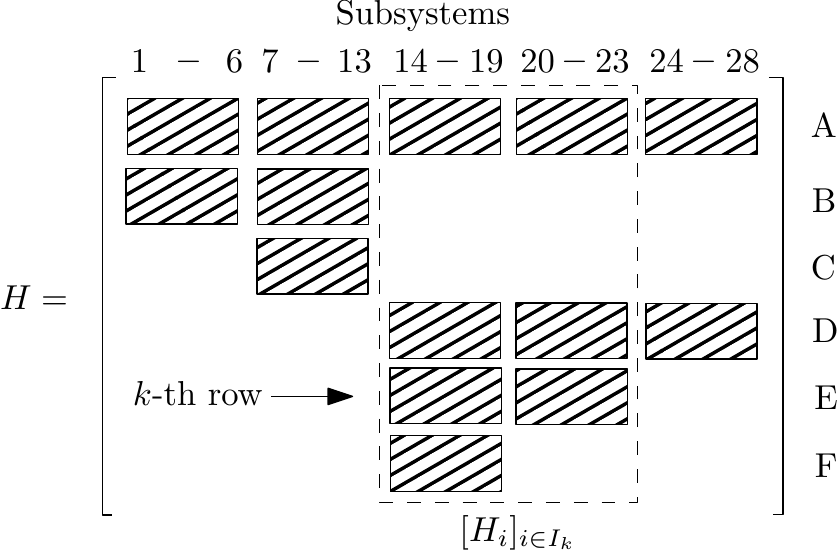}
\label{fig:coupling_matrix_block_structure_1}
} \quad
\subfigure[]{							\includegraphics[width=0.40\linewidth]{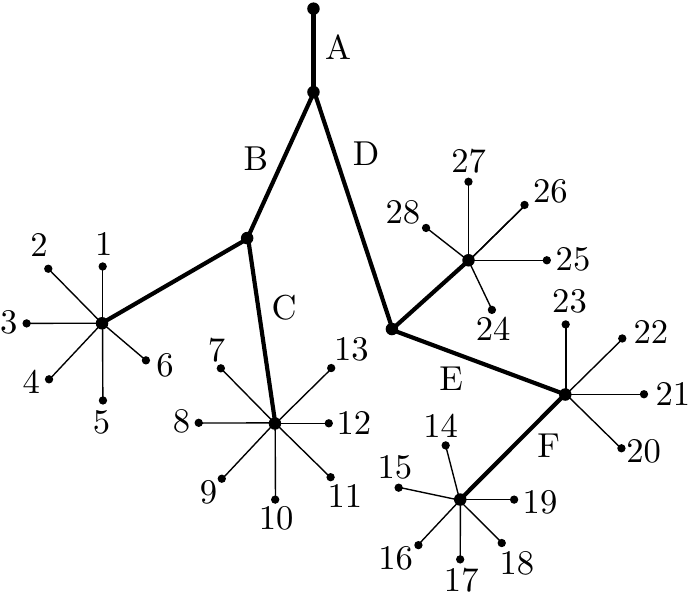}
\label{fig:coupling_matrix_block_structure_2}
}
\caption{(a) block structure considered in Theorem \ref{thm:reducing_contraction_with_rank_hi}; hatched boxes indicate non-zero submatrices, while the dashed box contains the submatrix $[H_i]_{i \in \I_k}$. (b) an example network that would give rise to such a block structured $H$. In this illustrative Figure there are 28 subsystems, and 6 sets of coupling constraints determined by constraints on the network links A--F.}
\end{figure}

\begin{Theorem}[refinement for block structure]
\label{thm:reducing_contraction_with_rank_hi}
Theorem \ref{theorem_main_tight} holds with the contraction \eqref{rho} substituted by
\begin{eqnarray}
  \label{rho_reduced_to_rank_hi} 
  \rho^k =  \rank([H_i]_{i \in \I_k}) \cdot \underset{i \in I_k}{\max} \left( \underset{x_i \in X_i}{\max} H^k_i x_i - \underset{x_i \in X_i}{\min} H^k_i x_i \right).
\end{eqnarray}
\end{Theorem}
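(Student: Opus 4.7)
The plan is to mirror the proof of Theorem \ref{theorem_main_tight} but to estimate the coupling-constraint violation row-by-row so that the block structure of $H$ can be exploited. First, I would invoke Theorem \ref{thm:x_lp-x_lambda} on the contracted programs $\Plpbar$ and $\Dbar$ to obtain an index set $I_1 \subseteq I$ with $|I_1| \geq |I|-m$ and $\xisb = \xislpb$ for every $i \in I_1$; let $I_d \Let I \setminus I_1$ denote the mismatch set. Since $\xslpb$ is feasible for $\Plpbar$, i.e.\ $\sum_{i\in I} H_i \xislpb \preceq \bar b = b - \rho$, the $k$-th slack at $\xsb$ satisfies
\begin{equation*}
\Bigl(\sum_{i\in I} H_i \xisb - b\Bigr)^k \leq -\rho^k + \sum_{i\in I_d} H_i^k\bigl(\xisb - \xislpb\bigr),
\end{equation*}
and because $H_i^k = 0$ for $i \notin I_k$, the right-hand sum effectively runs over $I_d \cap I_k$, each term bounded by $\max_{i\in I_k}(\max_{x_i\in X_i} H_i^k x_i - \min_{x_i\in X_i} H_i^k x_i)$.

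The key refinement is to bound $|I_d \cap I_k|$ by $\rank([H_i]_{i \in I_k})$ rather than by the global count $m$. I would obtain this by re-examining the basic-solution tableau argument behind Theorem \ref{thm:x_lp-x_lambda}: after writing each $\xislpb$ as a convex combination of the vertices of $X_i$, a vertex of $\Plpbar$ is a basic feasible solution whose positive components are accounted for by the convexity rows plus a set of tight coupling rows. Restricting attention to the variables indexed by $I_k$, the only coupling rows that can constrain them are those containing nonzero entries in columns $I_k$; these rows span a subspace of dimension $\rank([H_i]_{i \in I_k})$, so they contribute at most that many linearly independent tight constraints. Consequently at most $\rank([H_i]_{i \in I_k})$ subsystems in $I_k$ can be ``fractional'', and by the vertex-selection argument of Theorem \ref{thm:x_lp-x_lambda} all others satisfy $\xisb = \xislpb$. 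Substituting into the row-$k$ slack and invoking \eqref{rho_reduced_to_rank_hi} yields $\bigl(\sum_{i\in I} H_i \xisb - b\bigr)^k \leq 0$, so $\xsb \in \mxsb$ is feasible for \ref{eq:P} coordinate-by-coordinate. The remaining ingredients (dual optimality of $\lsb$, invocation of Fact \ref{prop:there_are_vertex_solns}) carry over unchanged from the proof of Theorem \ref{theorem_main_tight}.

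The main obstacle is making the per-block rank bound on $|I_d \cap I_k|$ rigorous. The global count $|I_d| \leq m$ does not localize to $I_k$ for free: a subsystem may belong to several $I_k$'s at once, as in tree or tree-star networks where a branch subsystem participates both in its own branch constraint and in aggregated parent-link constraints. The cleanest route is likely to freeze $x_i = \xislpb$ for $i \notin I_k$ in the convex-combination reformulation of $\Plpbar$, obtaining a reduced LP whose only active coupling rows are those with at least one nonzero in columns $I_k$, and then apply the simplex dimension count to this reduced problem. Verifying that the reduced LP inherits the restriction of $\xslpb$ as a basic solution and that its tight coupling constraints contribute rank rather than cardinality to the count is the delicate step.
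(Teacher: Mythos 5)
Your skeleton --- the row-by-row slack estimate followed by a bound of the form $|I_d \cap \I_k| \le \rank([H_i]_{i\in\I_k})$ --- is exactly the paper's, and your first display is correct. The gap is the one you flag yourself: the inference ``the tight coupling rows touching block $\I_k$ span a subspace of dimension $\rank([H_i]_{i\in\I_k})$, hence at most that many subsystems in $\I_k$ are fractional'' is not an argument (tight rows do not translate one-for-one into fractional subsystems), and the reduced-LP construction you sketch to repair it is left unverified. The paper closes this step by a direct support count in the extended vertex-weight program \ref{eq:PP_lp}, with no slicing. Under Assumption \ref{assumption:uniqueness}, steps (a) and (b) of the proof of Theorem \ref{thm_1} give that $\xisb \neq \xislpb$ exactly when $\xislpb \notin \vert(X_i)$, i.e.\ exactly when the optimal weights of subsystem $i$ have at least two positive entries $\pijs$; every subsystem has at least one positive entry because of the convexity row $\sum_{j\in J_i}\pij = 1$. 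Hence
\begin{equation*}
|\mathrm{supp}(\piiks)| \;\ge\; |\I_k \setminus \mis{\I}| + 2\,|\mis{\I}\cap \I_k| \;=\; |\I_k| + |\mis{\I}\cap \I_k|.
\end{equation*}
On the other hand, $\ps$ is a basic feasible solution, so the columns of $\bigH$ indexed by its support are linearly independent; in particular the supported columns lying in block $\I_k$ number at most $\rank([\bigH_i]_{i\in\I_k}) \le \rank([H_i]_{i\in\I_k}) + |\I_k|$, since the top block of each column of $\bigH_i$ is $H_i \xij$ and therefore lives in the column space of $[H_i]_{i\in\I_k}$, while the convexity rows contribute at most $|\I_k|$ to the rank. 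Subtracting the two displays gives $|\mis{\I}\cap\I_k| \le \rank([H_i]_{i\in\I_k})$, which is the bound you need.

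Your alternative route --- freezing $x_i = \xislpb$ for $i\notin\I_k$ and counting in the reduced LP --- can be made rigorous (the restriction of a vertex, with the complementary coordinates frozen, is indeed a vertex of the sliced feasible set, by the usual convex-combination extension argument), but you would then still have to run the same count of positive vertex weights against the rank of the column block of the reduced constraint matrix. It therefore buys nothing over the direct argument and adds the verification step you correctly identify as delicate. As written, the proposal identifies the right statement to prove but does not prove it.
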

\begin{proof}
See Appendix \ref{sec:proof_reduced_conservatism}.
\end{proof}

This theorem implies, as a special case, that we can generally substitute $m$ with $\rank(H)$ in \eqref{rho}, independently of whether the problem has block structure. This is important when the vectors determining the coupling constraints are linearly dependent. An example exploiting this result is discussed in Section \ref{sec:4_example}.

Furthermore, instead of immunizing against $\rank([H_i]_{i \in \I_k})$ times the largest subproblem budget consumption change, it is sufficient to immunize against the $\rank([H_i]_{i \in \I_k})$ largest ones, i.e.,
\begin{Remark}
The contraction \eqref{rho} can be safely substituted by
\begin{eqnarray}
	\rho^k =  \underset{\begin{subarray}{c}\tilde{I} \subseteq \I_k\\ |\tilde{I}| = \rank([H_i]_{i \in \I_k}) \end{subarray}}{\max} \left( \sum\limits_{i \in \tilde{I}} \underset{x_i \in X_i}{\max} H^k_i x_i - \underset{x_i \in X_i}{\min} H^k_i x_i \right).
\end{eqnarray}
\end{Remark}

Finally, an important subclass of problems for which we can suppress the necessary contraction to $\rho = 0$ is the following.

\begin{Remark}
If $H_i x_i \succeq 0$ for all $x_i \in X_i$, and $\textbf{0} \in X_i$, then one can obtain the same performance bound as in \eqref{eq:performance_bound} while setting $\rho = 0$, and a feasible solution can be recovered by setting $\xis = \textbf{0}$ for at most $m$ subsystem solutions.
\end{Remark}
This is for instance the case for the (multidimensional) knapsack problem and some of its variants. Namely, a feasible solution is obtained by removing at most $m$ items from the knapsacks.

%\robincom{This result for the MDKP must exist somewhere}

%\robincom{OLD
%\begin{Remark}
%\label{remark:on_rank_H}
%	In consideration of \cite[Prop. 2.1.4 (b)]{bertsekas_cvx_theory}, a tighter bound for the number of subproblem components for which $\xslp$ and any $\xls \in \mxs$ coincide, given in Theorem \ref{thm_1}, is given by $\rank(H) \leq m$, where the matrix $H$ is \mbox{$H \eqdef \left[ H_1, H_2, \dots, H_{|I|}\right]$}.
%\end{Remark}
%In light of this, another reduction of conservatism is possible when the vectors determining the coupling constraints are linearly dependent.
%\begin{Remark}
%The contraction \eqref{rho} can be safely substituted by
%\begin{eqnarray}
%	\rho^k =  \rank(H) \cdot \underset{i \in I}{\max} \left( \underset{x_i \in X_i}{\max} H^k_i x_i - \underset{x_i \in X_i}{\min} H^k_i x_i \right),
%\end{eqnarray}
%where $H \eqdef [H_1, H_2, \dots, H_{|I|}]$.
%\label{rem:contraction_only_rank_H}
%\end{Remark}
%This is for instance useful when the coupling constraints are box constraints. An example of this kind is illustrated in Section \ref{sec:4_example}.
%}

\subsection{Further Discussion on the Performance Bound}
\label{sec:increasing_margin_discussion}

One of the key factors contributing to the optimality gap identified in Theorem \ref{thm:solutions_quality} is the performance loss due to the contraction $\rho$, determined by \mbox{$[J^\star_{\overline{\mathcal{P}}_{\mathrm{LP}}} - \JPLP]$};
%the increase in the optimal value of $\mathcal{\overline{P}}_{\mathrm{LP}}$ with respect to \ref{eq:P_lp};
see the proof of Theorem \ref{thm:solutions_quality}, in particular the term (ii), in Section \ref{sec:proof_quality_of_solns}. 
%of the proposed solution method is the sensitivity of the program \ref{eq:P_lp} with respect to the contraction step. 
%\robincom{check this paragraph}
In Theorem \ref{thm:solutions_quality}, Assumption \ref{ass:slater_point} allows us to establish an explicit bound on this term. Here we show that this performance loss can be characterized by the data of only $m$ subsystems, which explains why one may expect a behavior for the optimality gap similar to \eqref{eq:vanishing_opt_gap} even in the absence of Assumption \ref{ass:slater_point}. 

%Assumption \ref{ass:slater_point} is used in the proof of Theorem \ref{thm:solutions_quality} in order to establish an explicit bound on the sensitivity of \ref{eq:P_lp} (see (ii) in Section \ref{sec:proof_quality_of_solns}). Here we show that we this sensitivity is determined by the characteristics of only $m$ subsystems, even without Assumption \ref{ass:slater_point}. This explains why one can expect the optimality gap behavior \eqref{eq:vanishing_opt_gap} to hold more generally.

%To apply Theorem \ref{thm:solutions_quality} one requires to have access to a strictly feasible point. Despite the fact that the existence of such a solution would be expected in practice, one may argue that this margin would not be really scalable with $|I|$. 
%One can inspect that this strict feasible point allows us to control the gap between the contracted program \ref{eq:P_lp} and ${\mathcal{\ol{P}}_{\mathrm{LP}}}$; see the tern (ii) in the proof of Theorem \ref{thm:solutions_quality} in Appendix \ref{sec:proof_quality_of_solns}. 
%In the following theorem we, however, formally justify why we may still observe the vanishing optimality gap even in the absence of the strict feasible point. In fact, we % and not the number of entire subsystems $|I|$.
		
		\begin{Prop}
		\label{thm:sens}
			Consider the perturbed version of the program \ref{eq:P_lp}
				\begin{equation}
					\left\{
					\begin{array}{lll}
						\text{minimize} & \sum\limits_{i \in I} c_i\tr x_i\\
						\text{subject to} & \sum\limits_{i \in I} H_i x_i \preceq b  + \eps \ind\\
						& x_i \in \conv(X_i) &  i \in I,
					\end{array}
					\right.
					\label{eq:P_lp_eps}
					\tag{$\mathcal{P}_{\mathrm{LP}}(\eps)$}
				\end{equation}
			whose optimal value is denoted by $\JPLPp{\eps}$. Let $\data{i} \Let (X_i, H_i,c_i)$ be the tuple representing the data of the $i^\text{th}$ subsystem, where the sets $X_i$ are all compact. Then, there exist a partition $I = I_1 \cup I_2$ and a constant $L(I_2) \Let L\big( (\data{i})_{i\in I_2} \big)$, only depending on the data of subsystems indexed by $I_2$, such that $|I_2| \le m$ and 
			\begin{align*}
				0\le \JPLPp{0} - \JPLPp{\eps} \le L(I_2) \eps, \qquad \forall \eps \in \R_+.
			\end{align*}
		\end{Prop}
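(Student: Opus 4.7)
The left inequality $\JPLPp{0}\ge\JPLPp{\eps}$ is immediate: when $\eps\ge 0$, passing from $\mathcal{P}_{\mathrm{LP}}(0)$ to $\mathcal{P}_{\mathrm{LP}}(\eps)$ only relaxes every coupling constraint, so the LP minimum cannot increase. For the right inequality, the plan is to isolate a subset $I_2\subseteq I$ with $|I_2|\le m$ whose data alone controls the sensitivity of $\JPLPp{\cdot}$ at $\eps=0$. To this end, I would let $\xslp$ be an optimal solution of $\mathcal{P}_{\mathrm{LP}}$ and invoke the simplex-tableau argument that underpins Theorem \ref{thm:x_lp-x_lambda}: $\xslp$ may be chosen so that $\xislp\in\vert(X_i)$ for every $i$ in some set $I_1\subseteq I$ with $|I\setminus I_1|\le m$, setting $I_2\Let I\setminus I_1$.

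The next step is to introduce the reduced LP
\begin{equation*}
V_2(\eps)\Let\min\Big\{\textstyle\sum_{i\in I_2}c_i^\top x_i\ :\ \sum_{i\in I_2}H_i x_i\preceq b_2+\eps\ind,\ x_i\in\conv(X_i)\ \forall i\in I_2\Big\},
\end{equation*}
where $b_2\Let b-\sum_{i\in I_1}H_i\xislp$. Optimality of $\xslp$ yields the identity $V_2(0)=\JPLPp{0}-\sum_{i\in I_1}c_i^\top\xislp$. Classical parametric LP sensitivity then ensures that $V_2(\cdot)$ is piecewise linear and convex in $\eps$, with its Lipschitz constant in the direction $\ind$ bounded by
\begin{equation*}
L(I_2)\Let\max\big\{\ind^\top\lambda\ :\ (\lambda,\mu)\in\vert(\mathcal{Y}_2)\big\},
\end{equation*}
where $\mathcal{Y}_2$ denotes the standard-form dual feasible polyhedron of the sub-LP. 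Because $\mathcal{Y}_2$ is determined exclusively by the matrices $H_i$, the cost vectors $c_i$, and the polyhedral descriptions of $\conv(X_i)$ for $i\in I_2$, the scalar $L(I_2)$ depends only on $(\data{i})_{i\in I_2}$, as required by the statement.

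The final step is to transfer this sensitivity bound from $V_2$ to the full-problem value function $\JPLPp{\cdot}$. Let $\ls$ denote a dual optimum of $\mathcal{P}_{\mathrm{LP}}$. A Lagrangian-decomposition argument along the partition $I=I_1\cup I_2$, exploiting that $\xislp$ is an inner minimizer at $\ls$ for each $i\in I_1$, shows that $\ls$ is simultaneously a dual optimum of the sub-LP. Applying weak duality to the perturbed full problem at $\ls$ then gives $\JPLPp{\eps}\ge\JPLPp{0}-\eps\,\ind^\top\ls$, and combining with the vertex bound $\ind^\top\ls\le L(I_2)$ yields the desired estimate $\JPLPp{0}-\JPLPp{\eps}\le L(I_2)\,\eps$. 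The step I expect to require the most care is establishing $\ind^\top\ls\le L(I_2)$: the sub-LP's dual-optimal face may be strictly larger than the full problem's, so the bound needs either a uniqueness/basic-feasible-solution argument or a convex-combination decomposition of $\ls$ over the vertices of $\mathcal{Y}_2$, with some attention to possible recession directions of the sub-LP's dual polyhedron.
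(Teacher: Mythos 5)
Your overall architecture — reduce to a sub-LP on at most $m$ subsystems, then invoke parametric-LP sensitivity with a constant independent of the resource vector — matches the paper's, but there is a genuine gap in how you select $I_2$, and it is not a technicality. You choose $I_2$ from a vertex optimizer of the \emph{unperturbed} problem $\mathcal{P}_{\mathrm{LP}}(0)$. This fails whenever that optimizer happens to have all (or the "wrong") components at vertices of the $X_i$. Concrete counterexample: $I=\{1,2\}$, $m=1$, $X_i=\{0,1\}$, $c_1=-2$, $c_2=-1$, $H_1=H_2=1$, $b=1$. The unique optimizer of $\mathcal{P}_{\mathrm{LP}}(0)$ is $(1,0)$, with both components in $\vert(X_i)$, so your recipe yields $I_2=\emptyset$ and $L(\emptyset)=0$; yet $\JPLPp{0}-\JPLPp{\eps}=\eps>0$ because the perturbed optimizer is $(1,\eps)$. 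The correct choice is $I_2=\{2\}$, i.e., the subsystem that \emph{absorbs} the extra resource — information visible only in the perturbed problems. This is precisely why the paper selects the partition from vertex optimizers of $\mathcal{P}_{\mathrm{LP}}(\eps_n)$ along a sequence $\eps_n\downarrow 0$, uses finiteness of the partitions and of the sets $\vert(X_i)$ to freeze $I_1$ and the frozen vertex values along a subsequence, and then uses continuity of $\eps\mapsto\JPLPp{\eps}$ to identify those frozen values with components of an optimizer of $\mathcal{P}_{\mathrm{LP}}(0)$, yielding the exact identity $\JPLPp{0}-\JPLPp{\eps_n}=J^\star_{\mathcal{R}_{I_2}}(0)-J^\star_{\mathcal{R}_{I_2}}(\eps_n)$ before applying the sensitivity lemma and a convexity argument to pass from the sequence to all $\eps$.

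The second weak point is the step you yourself flag: bounding $\ind\tr\ls$ (the full problem's dual optimum) by the maximum of $\ind\tr\lambda$ over the \emph{vertices} of the sub-LP's dual polyhedron. Even granting that $\ls$ is dual-optimal for the sub-LP, its dual-optimal face can be unbounded (e.g., when $b-\sum_{i\in I_1}H_i(x^\star_{\mathrm{LP}})_i$ sits on the boundary of the sub-LP's feasibility region), and then $\ls$ decomposes as a convex combination of vertices \emph{plus a recession direction} whose magnitude is dictated by the subsystems in $I_1$, not by the data of $I_2$. So the inequality $\ind\tr\ls\le L(I_2)$ is not established, and in degenerate instances it is false for the $\ls$ you are handed. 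The paper sidesteps duality entirely here: its Lemma on the parametrized LP works on the primal side, extracting an invertible active-constraint submatrix $\sub{A}$ at a vertex optimizer of the \emph{reduced} program and bounding the sensitivity by $m\|c\|_2\|\sub{A}^{-1}\|_2$, a quantity manifestly independent of the resource vector and hence of the data outside $I_2$. To repair your argument you would need both fixes: pick $I_2$ from the perturbed problems, and replace the "dual vertex of the full problem" bound by a sensitivity bound computed directly on the reduced program.
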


\begin{proof}
	The proof, along with some preliminaries, is in Appendix \ref{sec:pf:sens}.  
\end{proof}

This result allows us to provide the following performance bound on the optimality gap for the recovered solutions.
%The following Theorem is a consequence of Proposition \ref{thm:sens}.

\begin{Thm}[performance without Slater]
\label{thm:new_solutions_quality}
	Suppose the programs $\overline{\mathcal{P}}_{\mathrm{LP}}$ and $\overline{\mathcal{D}}$ satisfy Assumption \ref{assumption:uniqueness}. Then, any solution $\xsb \in \mxsb$ recovered satisfies
	\begin{eqnarray}
		J_{\mathcal{P}}(\xsb) - \JP \leq m \cdot \max_{i \in I} \gamma_i + \max_{\begin{subarray}{c}{I_2 \subset I} \\ |I_2| \le m \end{subarray}}L(I_2) \cdot \|\rho\|_\infty
	\label{eq:new_performance_bound}
	\end{eqnarray}
	where $\gamma_i$ and $\rho$ are as defined in \eqref{eq:duality_gap_bound} and \eqref{rho}, respectively, and $L(I_2)$ is the constant determined by subsystems indexed by $I_2$ as introduced in Proposition \ref{thm:sens}. 
\end{Thm}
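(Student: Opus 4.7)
The plan is to split the optimality gap into pieces that each align with one of the two structural results already established in the paper---Theorem~\ref{thm:x_lp-x_lambda} on the closeness of $\xsb$ to $\xslpb$, and Proposition~\ref{thm:sens} on sensitivity of the LP value. Specifically, I would write
\[
	J_{\mathcal{P}}(\xsb) - \JP = \bigl[J_{\mathcal{P}}(\xsb) - J^\star_{\overline{\mathcal{P}}_{\mathrm{LP}}}\bigr] + \bigl[J^\star_{\overline{\mathcal{P}}_{\mathrm{LP}}} - \JPLP\bigr] + \bigl[\JPLP - \JP\bigr],
\]
and observe that the third bracket is non-positive, since $\mathcal{P}_{\mathrm{LP}}$ is a relaxation of $\mathcal{P}$. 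It therefore suffices to bound the first two brackets by $m\cdot\max_{i\in I}\gamma_i$ and by $\max_{|I_2|\le m} L(I_2)\cdot\|\rho\|_\infty$, respectively.

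For the first bracket, I would apply Theorem~\ref{thm:x_lp-x_lambda} to the pair $\overline{\mathcal{P}}_{\mathrm{LP}}$ and $\overline{\mathcal{D}}$, which is permitted because the hypothesis ensures Assumption~\ref{assumption:uniqueness} holds for these contracted problems. This produces an index set $I_1\subseteq I$ with $|I_1|\ge |I|-m$ on which $\xsb_i=\xslpb_i$, so only at most $m$ components contribute to the difference $\sum_i c_i^\top(\xsb_i-\xslpb_i)$. For each such component, $\xsb_i\in X_i$ while $\xslpb_i\in\conv(X_i)$, and since a linear functional attains the same minimum over $X_i$ as over $\conv(X_i)$, we get $c_i^\top\xsb_i - c_i^\top\xslpb_i\le\gamma_i$. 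Summing over the at most $m$ discrepant indices yields $J_{\mathcal{P}}(\xsb) - J^\star_{\overline{\mathcal{P}}_{\mathrm{LP}}}\le m\cdot\max_{i\in I}\gamma_i$.

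For the second bracket, I would reapply Proposition~\ref{thm:sens}, but with the base budget $\bar b$ playing the role of $b$ and perturbation $\eps=\|\rho\|_\infty$. Denoting by $v(\,\cdot\,)$ the LP value of $\mathcal{P}_{\mathrm{LP}}$ as a function of the resource vector, this yields a subset $I_2\subseteq I$ with $|I_2|\le m$ and a constant $L(I_2)$ such that $v(\bar b) - v(\bar b + \|\rho\|_\infty\ind)\le L(I_2)\|\rho\|_\infty$. Since $\bar b + \|\rho\|_\infty\ind\succeq \bar b + \rho = b$ componentwise and $v$ is non-increasing in the resource vector, we have $v(b)\ge v(\bar b+\|\rho\|_\infty\ind)$, so $J^\star_{\overline{\mathcal{P}}_{\mathrm{LP}}} - \JPLP = v(\bar b) - v(b)\le L(I_2)\|\rho\|_\infty\le \max_{|I_2|\le m} L(I_2)\cdot\|\rho\|_\infty$. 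Combining the two bounds and discarding the non-positive third bracket gives \eqref{eq:new_performance_bound}.

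The main subtlety is the legitimacy of invoking Proposition~\ref{thm:sens} around an arbitrary base resource vector rather than the specific $b$ appearing in its statement. This step is natural because the constant $L(I_2)$ depends only on the subsystem data $(X_i,H_i,c_i)_{i\in I_2}$ and not on the base vector itself, so the same argument carries over verbatim starting from $\bar b$; the only effect of the new base is to produce a possibly different partition $I_1\cup I_2$, which is absorbed by the maximum over all subsets of cardinality at most $m$. The remainder of the plan is additive bookkeeping together with the elementary monotonicity of the LP value function in the resource vector.
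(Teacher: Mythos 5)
Your proposal is correct and follows essentially the same route the paper intends: the paper states that the proof of Theorem~\ref{thm:new_solutions_quality} ``essentially follows the same analysis'' as the proof of Theorem~\ref{thm:solutions_quality}, i.e.\ the identical three-term decomposition with term (i) bounded by $m\cdot\max_{i\in I}\gamma_i$ via Theorem~\ref{thm_1}, term (iii) non-positive, and term (ii) now controlled by Proposition~\ref{thm:sens} instead of the Slater-based multiplier bound. Your handling of the only real subtlety --- re-basing the perturbation at $\bar b$, majorizing $\rho$ by $\|\rho\|_\infty\ind$, and absorbing the resulting partition into the maximum over $|I_2|\le m$ --- is exactly the justification the paper leaves implicit.
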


The proof of Theorem \ref{thm:new_solutions_quality} essentially follows the same analysis of Section \ref{sec:proof_quality_of_solns}.
%together with the result of Proposition \ref{thm:sens} for the sensitivity of the program \ref{eq:P_lp}.
In light of this theorem, it is then clear that if $\{\gamma_i\}_{i\in I}$ and $\{L(I_2)\}_{I_2 \subset I}$ are uniformly bounded, and $\JP$ grows linearly with $|I|$, we reach the same conclusion on the optimality gap behavior as in \eqref{eq:vanishing_opt_gap}. These uniform bounds are satisfied if the diversity of the subsystems added to the problem, when we increase its size, is limited.

%The bound on $\gamma_i$ effectively implies the sets $X_i$ to be uniformly bounded, which is always the case in practice, while the bound on $L(I_2)$ implicitly limits the diversity of $m$ selection subsystems.
%requires the same degree of homogenuity between any $m$ selection of subsystems. 
%, which is always the case in practice. The latter requires the subsystems to satisfy a .
%\peycom{Peyman: ``Theorem \ref{thm:sens} explains that the optimality gap indeed depends on the data of $m$ subsystems (how? the theorem is about sensitivity, not optimality gap). In other words, this theorem formally justifies why homogeneity among the subsystems can be expected to improve the performance of the proposed method.''}

\section{Application Example: Charging of Plug-in Electric Vehicles (PEVs)}
\label{sec:4_example}
%The main objective of this Section is to show how the proposed method performs on large scale mixed integer problems that are of practical interest. 

%We verify the ability of our proposed method to produce \textit{good} and \textit{feasible} solutions. Since we have no knowledge of any alternative practical method applicable to this problem, we make our comparison to the performance of a general purpose commercial solver (CPLEX).
We consider a fleet of $|I|$ Plug-in (Hybrid) Electric Vehicles (PEVs) that must be charged by drawing power from the same electricity distribution network. As the number of PEVs increases, it becomes necessary to manage their charging pattern in order to avoid excessive stresses on the lines and transformers of the network. The role of interfacing the fleet of PEVs with the network operators is taken over by a so-called aggregator. 

In this Section we take the perspective of such an aggregator. Its control task is to assign charging slots to each individual PEV under its jurisdiction. The charging schedules have to be compatible with the local requirements (e.g., a desired final state of charge SoC), as well as global, network wide constraints.

%The market share of PEVs is expected to increase substantially in the coming years -- by 2020, PEVs will comprise 25\% of all car sales in the US \cite{callaway_hiskens}. 
%Since most of the charging is expected to take place overnight \cite{OR_EVs}, it becomes necessary to manage the charging pattern of the PEVs in order to avoid excessive stresses on the lines and transformers of the electricity distribution network. The role of coordinating charging sequences is done either indirectly through prices or by direct control signals, issued by the distribution network operator or by a so-called \textit{aggregator}. The latter option is considered here. 
%is the option which is expected to become predominant and is considered here.
 
%The optimization model that we derive can be interpreted as a richer version of a large multidimensional knapsack problem. It is thus not surprising that exact solutions elude commercial solvers even for the smallest number of PEVs considered (we tried with CPLEX and Gurobi). This shows the need for algorithms capable of producing good feasible solutions within reasonable computation times ($\leq$ 30 min).

\subsection{Model}
%\robincom{other Simplifications to keep formulation compact: no arrival/departure times, no cycling costs for batteries. but we do keep grid constraints in consideration. }

We will only consider the problem of establishing a feasible overnight charging schedule, since this is this period when most charging will occur \cite{OR_EVs}. We will also assume that at the time when the schedule is to be decided (e.g., midnight), all PEVs are connected and their local charging requirements (initial and final required SoC) have been communicated to the aggregator. Both of these assumptions can be easily relaxed by buffering newly connected PEVs, and recomputing every 20 minutes a charging schedule with the new population information, in a receding horizon fashion similar to \cite{ita_EVs}. Further, we assume that charging can be interrupted and resumed, but in order to avoid excessive switching, once charging starts it must continue for at least 20 minutes. This is a reasonable way of charging Lithion-Ion batteries, the most common in PEVs, because they do not present memory effect \cite{search_for_better_batteries}. Non-interruptible charging is not discussed here as it is uncommon in practice, but those applications for which it may be necessary (e.g., Nickel-Cadmium batteries) can be readily incorporated in our proposed framework with an appropriate design of the local constraints. We thus split the overnight period in intervals of 20 minutes each, and assume that the aggregator has authority to flag, for each individual PEV, the available charging time slots. 

For each PEV $i \in I$, charging at the time step $k$ is allowed when $u_i[k] = 1$, otherwise $u_i[k] = 0$. We will also consider as a separate case the situation in which discharging (or vehicle-to-grid V2G) is possible. Then, the discharge requests are modelled using $v_i[k] \in \zo$. Charging and discharging rates $P_i$ are assumed to be constant, as done in \cite{ita_EVs,callaway_hiskens,hiskens_kundu,ufuk_discrete_charging,my_EV_CDC_paper} and reflecting the charging station protocol IEC 61851\footnote{This is particularly true in case of stations with low power ratings. More generally, smart charging stations compatible with the IEC 61851 standard could operate in a semi-continuous fashion, i.e., with a minimum current output when charging, that can be then modulated in a certain band. This requirement results in disjunctive models of the corresponding subsystems, which requires discrete variables and thus fits our proposed framework. However we do not consider this aspect in the model.}.

The objective of the aggregator is to maximize the profit while satisfying the local charging requirements of each individual PEV and the network constraints, which are established by the network operator. The optimization problem model we work with is as follows.
%Integer variables also enable simple modelling of conversion losses, see Equation \eqref{eq:EV_state_update}, and they would also allow one to encode ageing constraints related to battery charging and discharging cycles, but we have omitted this to keep the formulation compact.

%a variety of other practical features, such as , or to encode the hybrid behavior of other devices that are typically considered for load management, such as thermostatically controlled loads (see, e.g., \cite{callaway_hiskens}). However, here we do not consider these.

%Even though not considered in this work, integer variables within the local models are also useful to encode cycling constraints on battery charging and discharging, or to model the hybrid behaviour of other loads that are typically considered for load management, such as thermostatically controlled loads (see, e.g., \cite{callaway_hiskens}).
\begin{itemize}
	\item \textbf{Subsystems model}. The subsystems controlled are the PEVs batteries. Battery's $i$ charge level is denoted by $e_i[k]$, its initial state of charge is $\einit$, which by the end of the charging period has to attain at least $\eref$. The charging conversion efficiency is $\zeta_i^u \eqdef 1- \zeta_i$, while the discharging efficiency is $\zeta_i^v \eqdef 1 + \zeta_i$\footnote{The discharging efficiency must be greater than 1. This correctly encodes the fact that the amount of energy fed back to the network is smaller than the battery's energy content decrease.}. We denote by $\emin$ and $\emax$ the battery's capacity limits. We thus have
	\begin{subequations} 
	\begin{align}
	%\begin{array}{llll}
			& e_i[0] = \einit
			\label{eq:model_e_init}\\
			& e_i[k+1] = e_i[k] + P_i \Delta T \bigl(\zeta_i^u u_i[k] - \zeta_i ^v v_i[k]\bigr)
			\label{eq:EV_state_update}\\
			& e_i[N] \geq \eref
			\label{eq:EV_energ_requirement}\\
			& \emin \leq e_i[k] \leq \emax\\
			& u_i[k] + v_i[k] \leq 1
			\label{eq:model_no_ctrl_overlap}\\
			& u_i, v_i \in \zo^{N}.
			%\label{eq:ui_and_vi_bool}
%		& e_i[0] = \einit
%		\label{eq:model_e_init}\\
%		& e_i[k+1] = e_i[k] + P_i \Delta T \bigl(\zeta_i^u u_i[k] - \zeta_i ^v v_i[k]\bigr)  & k \in \mathbb{N}_{[0,N-1]}
%		\label{eq:EV_state_update}\\
%		& e_i[N] \geq \eref
%		\label{eq:EV_energ_requirement}\\
%		& \emin \leq e_i[k] \leq \emax & k \in \mathbb{N}_{[0,N]}\\
%		& u_i[k] + v_i[k] \leq 1 & k \in \mathbb{N}_{[0,N-1]}
%		\label{eq:model_no_ctrl_overlap}\\
%		& u_i, v_i \in \zo^{N}.
	%\end{array}
	\label{eq:PEV_battery}
	\end{align}
	\end{subequations}
	Condition \eqref{eq:model_no_ctrl_overlap} removes the possibility of charging and discharging simultaneously. 
	\item \textbf{Coupling constraints}. Within a distribution system, network congestions typically occur on the lines departing from the substation, since the power flow at that point is the sum of all the power loads in the network, and thus largest \cite{lopes_integration_2011}. We therefore model congestion avoidance as a limit on the global aggregate charging and discharging power flow,
	\begin{align}
	%\begin{array}{llll}
	 & \pmin[k] \leq \sum\limits_{i \in I} P_i(u_i[k] - v_i[k]) \leq \pmax[k].
	 %\end{array}
	 \label{eq:PEV_coupling}
	\end{align}
	In cases when other network points are susceptible to congestions, similar coupling constraints have to be added, in which the sum is over a smaller subset of PEVs. Then Theorem \ref{thm:reducing_contraction_with_rank_hi} can be used to limit the necessary contraction.
	 %In order to avoid a distribution network stress spike, due for instance to an expected increase of consumption in the early morning, we assume that a preventive restriction on the aggregated PEVs consumption is imposed in the middle of the control horizon, reducing it to $\underline{P}^{\mathrm{max}}[k] = 1/2 \cdot \pmax$ (\color{red}see Figure TOT\color{black}).
	 
	 \item \textbf{Objective function}. The objective function encodes the cost the aggegator incurs to charge its fleet,
 	\begin{equation}
	 		\underset{u_u,v_i}{\mathrm{minimize}} \ 
	 		%\ \sum\limits_{i \in I} J_i^{\mathrm{EV}}(u_i,v_i) = 
	 		\sum\limits_{i \in I} \sum\limits_{k=0}^{N-1} P_i  \cdot \left(C^u[k] u_i[k] - C^v[k] v_i[k]\right) 
	 		\label{eq:PEV_objective}
 	\end{equation}
 	where $C^u$ and $C^v$ are, respectively, the price vector for electricity consumption and injection. We allow for time varying and possibly different charging and discharging prices. In the simulations we assume a 10\% markup on injection pricing, i.e., $C^v = 1.1 \cdot C^u$, which the system operator pays to the aggregator in order to incentivize PEVs to make the V2G functionality available. 
 	%\robincom{From ``Integration of Vehicle-to-Grid in the Western Danish Power System'': The EVs participating in V2G will be paid a capacity cost for its availability and an energy cost based on the activation [7]. The earnings gained by the EV owner from the regulation services are higher than the manual	reserves as the former services are required quite often in a day to match the grid fluctuations and are, therefore, offered a higher capacity price [9]}
\end{itemize}
We can write the complete optimization program \eqref{eq:model_e_init}--\eqref{eq:PEV_objective} as
\begin{equation}
\left\{ 
\begin{array}{lll}
	\underset{e,u,v}{\text{minimize}} & \sum\limits_{i \in I} P_i \left(C^u \cdot u_i - C^v \cdot v_i\right) \\
	\text{subject to} %& 0 \leq \sum_{i \in I} P_i(u_i[k] - v_i[k]) \leq P^{\text{max}}[k] & k \in \mathbb{N}_{[0,N-1]}\\
	& P^{\text{min}} \leq \sum\limits_{i \in I} P_i(u_i - v_i) \leq P^{\text{max}}\\
	& (e_i, u_i, v_i) \in X_i
\end{array}
\right. 
\label{eq:EV_primal_problem}
\end{equation}
with
%\begin{eqnarray}
%	X_i = \left\{\begin{bmatrix}
%	e_i\\	u_i\\ 	v_i
%	\end{bmatrix} \in \mathbb{R}^N \times \mathbb{Z}^{2N} \left|
%		\begin{array}{l}  
%			e_i = E_i^{\text{init}} + E_i^{\text{in}} u_i - E_i^{\text{out}} v_i\\
%			E_i^{\text{min}} \leq  e_i \leq E_i^{\text{max}} \\% & k \in \mathbb{N}_{[0,N]} \\
%			e_i[N] \geq E_i^{\text{ref}}\\
%			0 \leq u_i + v_i \leq 1
%		\end{array}  
%		\right.
%	\right\},
%\end{eqnarray}
\begin{equation}
	X_i = \left\{ \left. \begin{bmatrix}
	e_i\\	u_i\\ 	v_i
	\end{bmatrix} \in \mathbb{R}^N \times \mathbb{Z}^{2N}  \right|
		\text{Eq. } \eqref{eq:model_e_init}-\eqref{eq:PEV_battery}
	\right\}.
\end{equation}
%$E_i^{\text{in}} = \mathcal{B}P_i\Delta T \zeta_i^u$, $E_i^{\text{out}} = \mathcal{B}P_i\Delta T \zeta_i^v$ and appropriate matrix $\mathcal{B}$.

\begin{Remark}
\label{remark:old_result_not_ok_with_EV}
Note that the assumption in Theorem \ref{thm:bound_on_duality_gap} does not apply to this model. To see this, we consider the charge--only case. According to \eqref{eq:EV_primal_problem}, $H_ix_i = P_iu_i$, and a fractional $x_i \in \conv(X_i)$ implies that in at least one time step, charge is happening at a partial rate. To rectify it, one has to either increase it to the fixed charge rate or decrease it to 0. In the latter case it may however be necessary to increase charging at another time step, in order to satisfy the energy requirement of the EV \eqref{eq:EV_energ_requirement}. Since any such rectification will cause an increase of resources used at some time, the assumption cannot be met.
\end{Remark}

\subsection{Solution Method}
\label{sec:EV_soln_method}

We apply the method proposed in Theorem \ref{theorem_main_tight} to problem \eqref{eq:EV_primal_problem}, which we consider under two different scenarios: in the first, only charging is allowed ($v = 0$), while in the second, both charging and V2G controls are enabled. This allows us to illustrate how the method can be adapted in two cases in which the combinatorial structure of the subsystems is substantially different.

In both cases, the number of coupling constraints is $2N$. However, since these are box constraints, in consideration of Remark \ref{thm:reducing_contraction_with_rank_hi} we can reduce this number to $N$. Hence, the necessary contractions introduced in \eqref{rho}, for the charge only scenario and the case in which V2G is available, are, respectively,
%Thus, the physical interpretation of the Theorem is that, once the optimal $\ls$ is attained, infeasibilities may be caused by at most $N$ PEVs. 
\begin{equation}
\label{eq:EV_contractions}
\begin{array}{rl}
	\rho_{\mathrm{V2G}} & = N \cdot \max\limits_{i \in I} \left( \max P_i(u_i - v_i) - \min P_i(u_i - v_i) \right)\\
	 & = 2N \cdot \max\limits_{i \in I} P_i\\
	\rho_{\mathrm{charge}} & 
	%= N \cdot \max\limits_{i \in I} \left( \max P_i \cdot u_i - \min P_i \cdot u_i \right)\\
	= N \cdot \max\limits_{i \in I} P_i.\\
\end{array}
\end{equation} 
%used to contract the budgets $\pmin$ and $\pmax$ into $\pminb = \pmin + \rho$ and $\pmaxb = \pmax - \rho$. After 
Dualizing the complicating constraints leads to the dual problem
%\begin{small}
%{\small 
\begin{equation}
\label{eq:EV_dual_problem}
\begin{array}{lll}
	\underset{\lambda, \mu}{\sup} & 
	\sum\limits_{i \in I} \underset{(e_i,u_i,v_i) \in X_i}{\text{min}} P_i \left(  ( C^u + \delta_i^u - \lambda + \mu)u_i - ( C^v + \delta_i^v - \lambda + \mu)v_i \right)\\
	& + \left( \lambda \pminb - \mu \pmaxb \right)\\
	\text{s.t.} & \lambda, \mu \succeq 0,
\end{array}	
\end{equation}
%}
%\end{small}
in which $\lambda[k]$ is the dual variable associated with the lower power rating constraint $\pminb[k] \eqdef \pmin[k] + \rho$, and $\mu[k]$ is the variable for $\pmaxb[k] \eqdef \pmax[k] - \rho$. We note that the cost vector for the subsystems is highly symmetric -- every PEV receives the same price profile. In order to ensure that Assumption \ref{assumption:uniqueness} is satisfied, we introduce small additive perturbation terms $\delta_i^u$ and $\delta_i^v$ to the costs $C^u$ and, respectively, $C^v$.

For the outer (maximization) problem in \eqref{eq:EV_dual_problem} we use a subgradient method \cite{anstreicher_two_well_known} with a constant stepsize rule, which we decrease every $20-30$ iterations. 
%The magnitude of these perturbations is 2--3 orders of magnitude smaller than the corresponding nominal cost coefficients. This procedure is shown to substantially enhance dual convergence, as well as convergence of the primal iterates to solutions satisfying Theorem \ref{thm_1}.

The inner (minimization) problem, on the other hand, is decomposed into $|I|$ decoupled subproblems
%\begin{eqnarray}
%		\begin{array}{lll}
%			\underset{e_i,u_i,v_i}{\text{minimize}} & \sum\limits_{k=0}^{N-1} P_i  \left[ u_i[k] ( C^u[k] + \delta^u_i[k] - \lambda[k] + \mu[k]) - v_i[k] ( C^v[k] + \delta^v_i[k] - \lambda[k] + \mu[k]) \right]\\
%			\text{subject to} & e_i[0] = E_i^{\text{init}}\\
%			& e_i[k+1] = e_i[k] + P_i \Delta T \left\{(1-\zeta_i)u_i[k] - (1+\zeta_i)v_i[k]\right\}\\
%			& E_i^{\text{min}} \leq e_i[k] \leq E_i^{\text{max}}\\
%			& e_i[N] \geq E_i^{\text{ref}}\\
%			& u_i[k] + v_i[k] \leq 1 \\
%			& u_i, v_i \in \left\{0,1\right\}^{N},
%		\end{array}	
%\end{eqnarray}
which are optimal control problems of 1-dimensional systems. For the sole charging case, the optimal local strategy can be proven to be greedy\footnote{Optimality of greedy can be shown using a Dynamic Programming argument, but since it is straightforward we omit it for brevity.}: the least number of charging steps is performed, and those are selected at times of ''lowest local prices'' (i.e., taking into account $\lambda$ and $\mu$ as well). The local optimizations are thus computationally inexpensive in this case. For the V2G case, on the other hand, the optimal charging and discharging strategy is not as immediate, so it must be solved either as a generic optimization problem, or by applying the Dynamic Programming (DP) algorithm, see e.g\  \cite[p.23]{bertsekas_DP}. In our tests we apply DP.

\subsection{Simulation Setup}

We compare the performance of our proposed method with the results provided by CPLEX 12.5. For each fleet size considered, we generate 10 random instances based on the parameters provided in Table \ref{tab:EV_data} in Appendix \ref{sec:7_sim_results}. In order to ensure a fair comparison, since CPLEX is generally unable to find exact solutions to the model \eqref{eq:EV_primal_problem}, we first run our proposed algorithm on each problem instance, record the optimality gap (we get a tight lower bound for free as a by-product of our method), and then run CPLEX up to the same optimality gap. Furthermore, the perturbation $\delta_i$ is added to the objective function, and the perturbed problem is the one on which we deploy both our method as well CPLEX. This ensures that both methods are exposed to exactly the same problem.
%The amount of resources available $\pmax$ is dimensioned to ensure that the problem remains feasible after contraction. Note that exceedingly tight coupling constraints actually make the problem easier to solve, because the number of feasible combinations is decreased. On the other hand, when the coupling constraints are very weak (e.g., inactive), the problem also simplifies. In our experimentations, the chosen parameter value produces the hardest instances.
All our tests are performed on a Desktop PC with 8GB of RAM and a 3.10 GHz processor.

\subsection{Results}
\label{sec:EV_results_exp}

Figure \ref{fig:optimality_gap} illustrates the optimality gap of the recovered solutions (min, max and average). The asymptotic behaviour \eqref{eq:vanishing_opt_gap} is confirmed. 

\begin{figure}
\centering
	\includegraphics[width=0.70\linewidth]{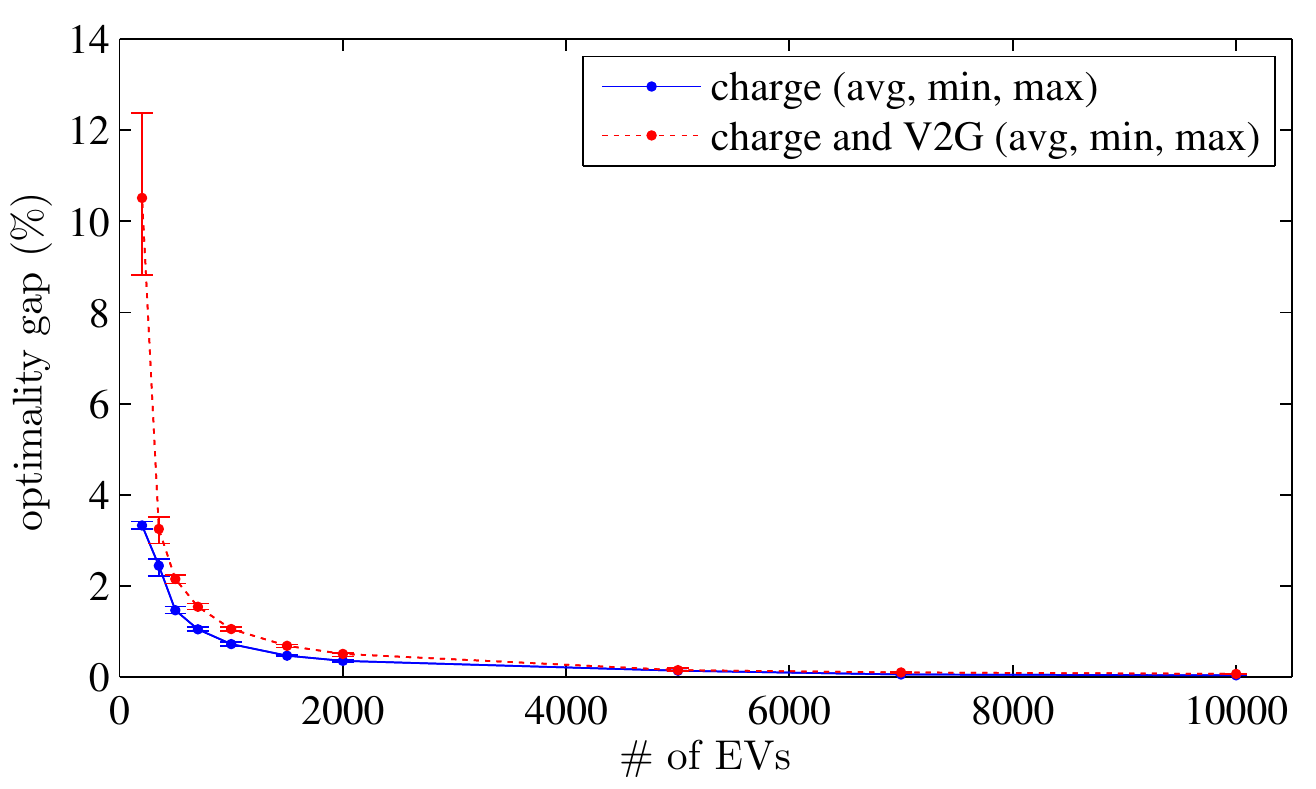}
	\caption{Optimality gap of the solutions recovered using the proposed method.}
	\label{fig:optimality_gap}
\end{figure}

Solution times are shown on Figure \ref{fig:optimality_gap_and_solve_time}. Owing to the greedy subproblem structure when only charging is allowed (discussed in the previous Section \ref{sec:EV_soln_method}), computation times in this case are fast: using our method, the largest instances are consistently solved within 5 seconds, see Figure \ref{fig:solve_time}. CPLEX is comparably fast. Figure \ref{fig:solve_time_v2g} shows solve times when the discharging functionality is enabled. V2G introduces a much more complicated combinatorial subproblem structure -- the optimal local control is not greedy anymore. In this case solution via CPLEX is impractical, because solve times are generally long and affected by substantial variances. For the case with 500 PEVs, solution times vary from 15 minutes to 4 and a half hours, and up to 6 hours on the two instances that CPLEX wasn't able to solve before running out of memory. Our proposed method has the advantage of providing consistent solution times across different instances, and the solution times substantially outperfom CPLEX also on those instances in which CPLEX provides a solution at all. It should be emphasized that the computations are carried out on a single processor, so that solve times can be reduced substantially by exploiting parallelism.
 
%For charging, computation times are extremely fast and comparable in a practical sense (though the proposed method is faster is almost all instances, and solves the largest problems within 5 seconds). The reason for such fast solution times in this case is the greedy subproblem structure explained in the previous subsection.

\begin{figure}
\centering
%\subfigure[Optimality Gap]{
%	\includegraphics[width=0.46\textwidth]{images/plot_optimality_gap}
%	\label{fig:optimality_gap}
%}
\subfigure[Solve time (charge only)]{
	\includegraphics[width=0.47\linewidth]{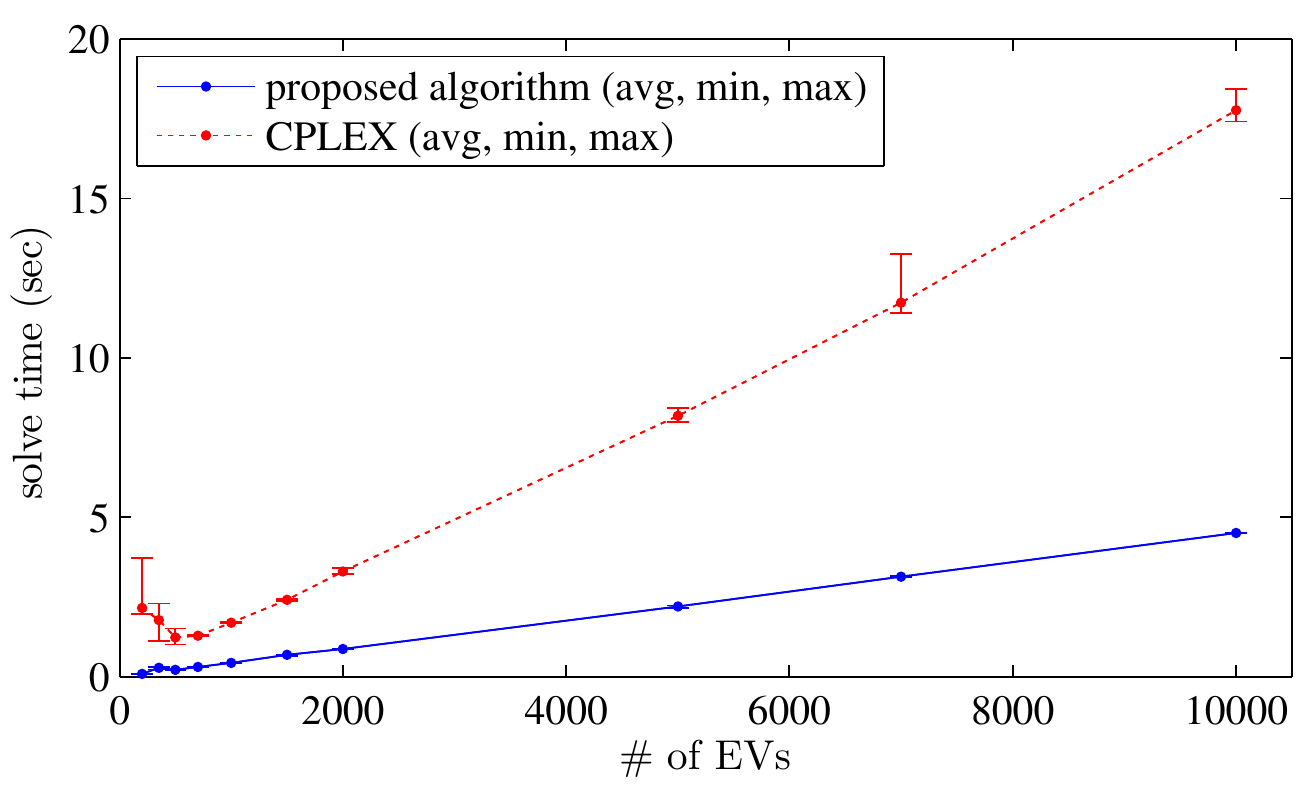}
	\label{fig:solve_time}
}
\vspace{-2mm}
\subfigure[Solve time (charge and V2G)]{
	\includegraphics[width=0.47\linewidth]{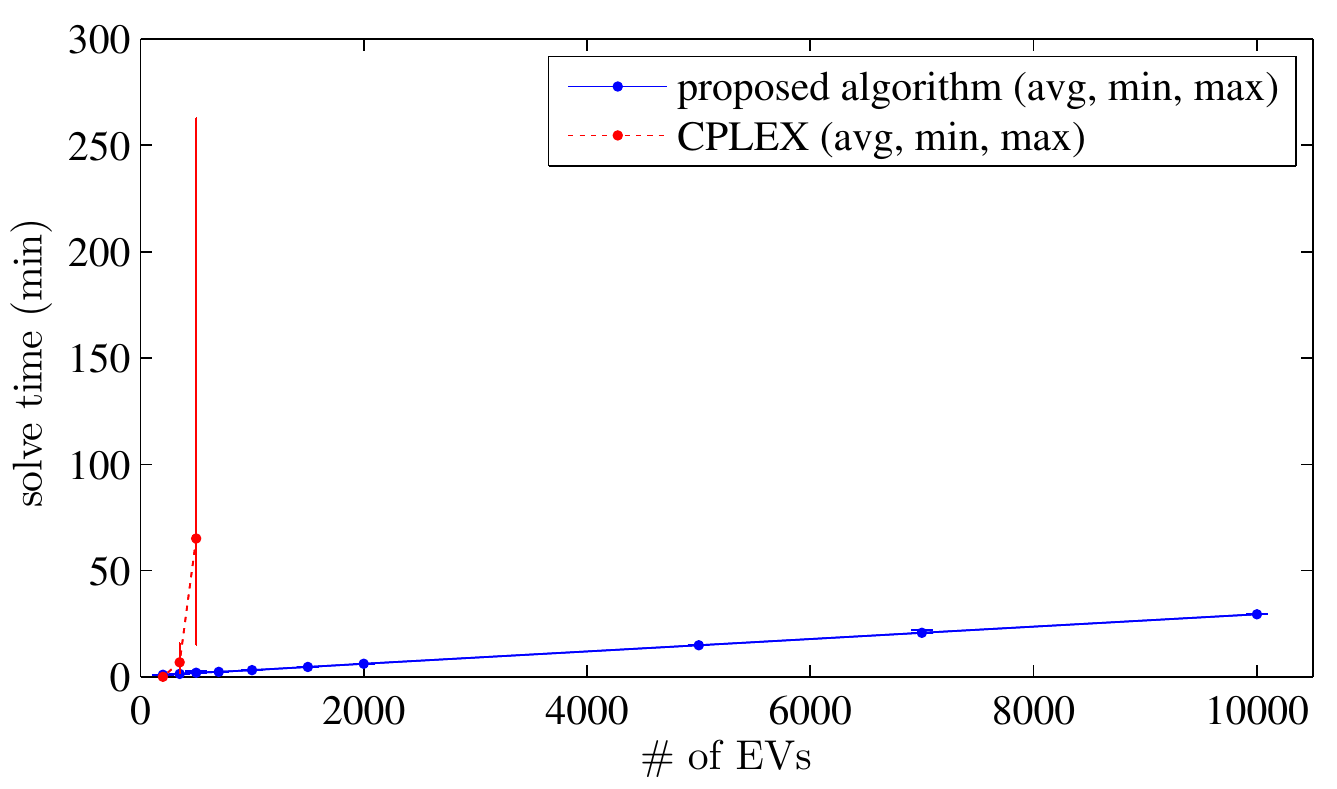}
	\label{fig:solve_time_v2g}
}
\caption{Solve times.}
\label{fig:optimality_gap_and_solve_time}
\end{figure}

\begin{figure}
\label{fig:dual_objective}
\centering
\subfigure[Dual objective]{
	\includegraphics[width=0.46\textwidth]{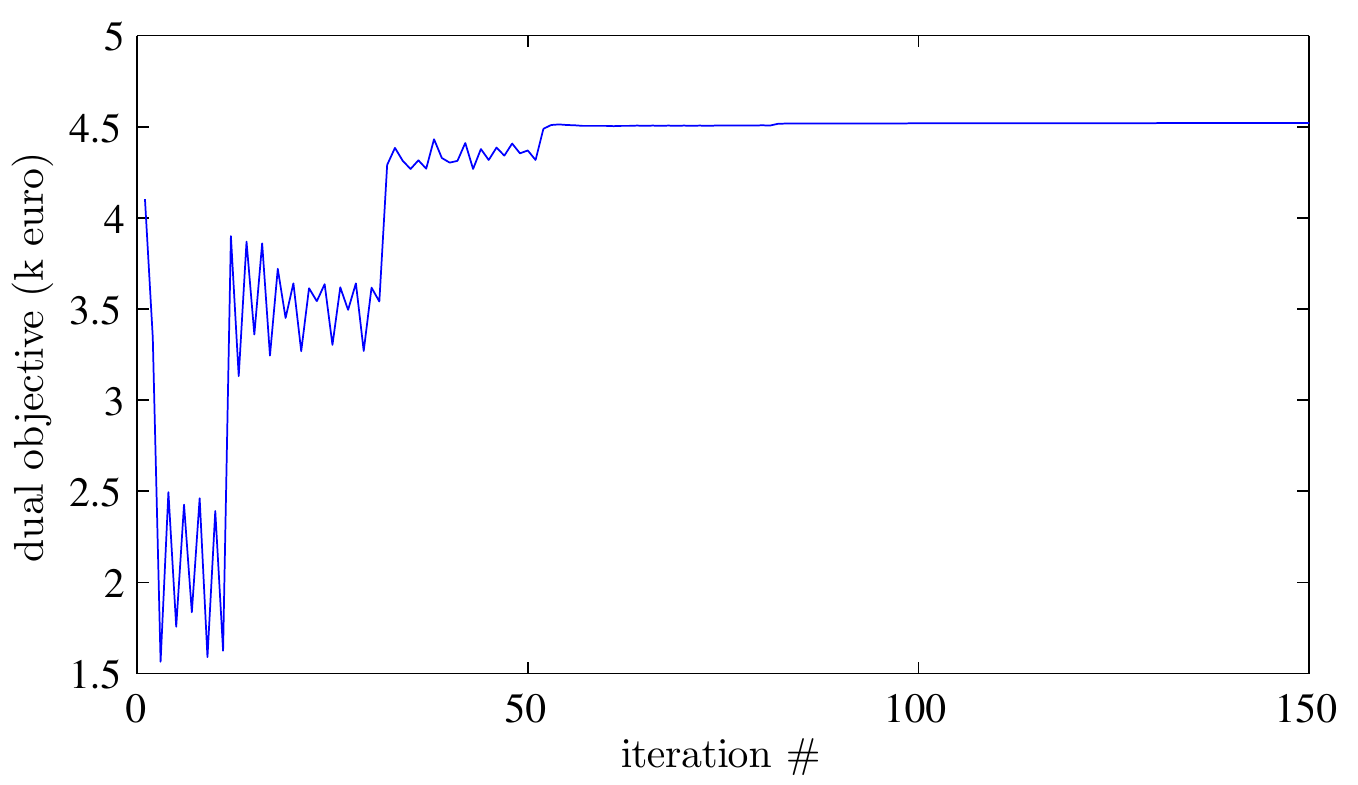}
	\label{fig:dual_objective_1}
}
\quad
\subfigure[Infeasibility]{
	\includegraphics[width=0.46\textwidth]{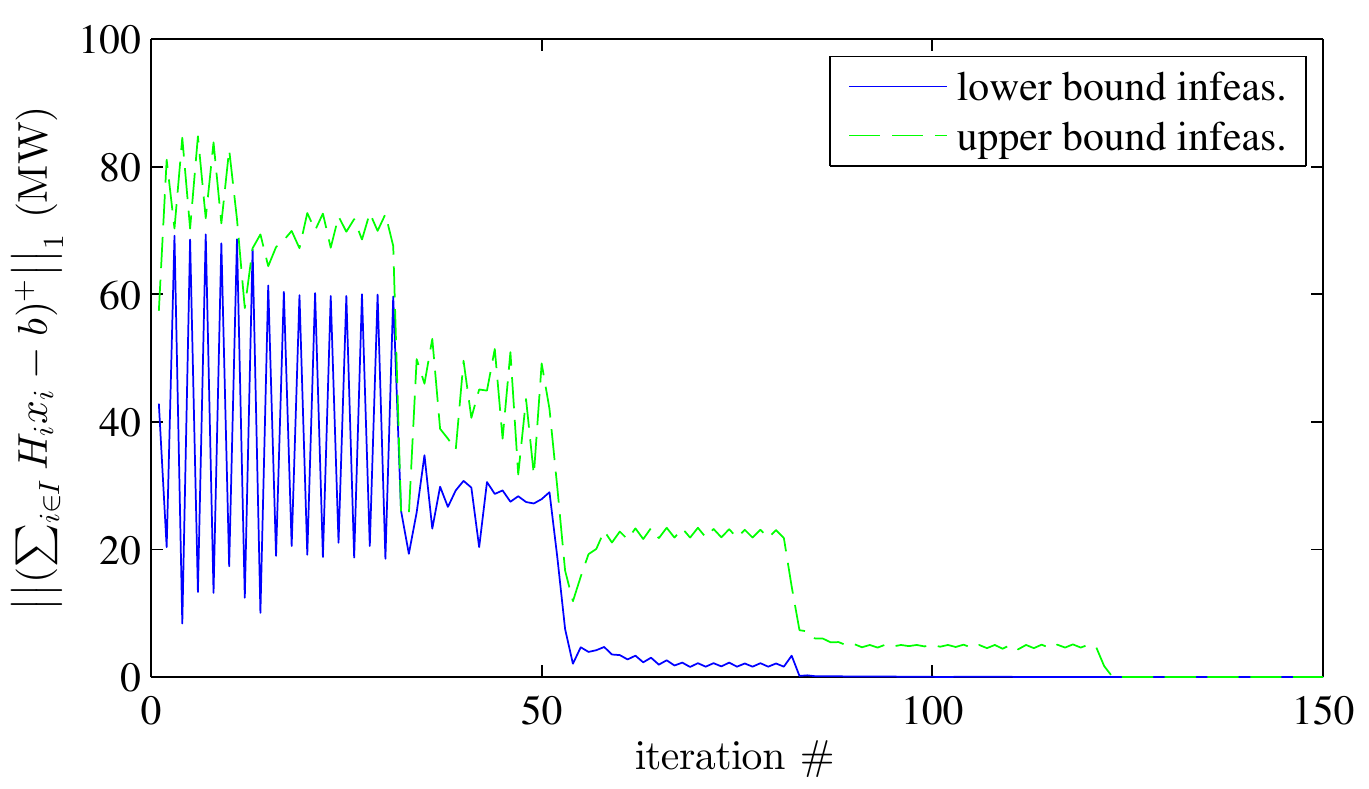}
	\label{fig:dual_objective_2}
}
\caption{Dual function value and feasibility violations at each iteration.}
\end{figure}

Figure \ref{fig:dual_objective_1} and \ref{fig:dual_objective_2} show the typical convergence behavior for the dual objective and the coupling constraints violations. 
%Figure \ref{fig:plot_infeasibility} and \ref{fig:plot_infeasibility_no_perturb} portray the corresponding evolution of the coupling constraints violations. Note that in both cases, the same distance of the final dual iterate to the optimal dual solution, $||\lambda^{(k)} - \ls||_2$, is attained. Adding the perturbations is shown to substantially enhance convergence to good feasible inner solutions. 
Note that inner solutions are feasible starting from iteration $\sim 120$, while one may have interrupted the dual method already at iteration $\sim 60$ given the dual objective behaviour. 

Finally, Figure \ref{fig:plot_v2g_local_charge_profiles} depicts the local charging behaviour of one individual PEV. Charge and discharge control signals, as well as the evolution of the SoC are shown. The desired final state of charge is achieved by the end of the charging period.

The numeric values of these results are reported in the Appendix, see Table \ref{tab:EV_charge_result} for the charge-only experiments, and Table \ref{tab:EV_V2G_result} for the results with V2G.

%\begin{figure}[h!]
%	\begin{minipage}{0.48\columnwidth}
%		\includegraphics[width=\columnwidth]{./images/plot_v2g_local_charge_profiles}
%	\end{minipage}
%	\begin{minipage}{0.48\columnwidth}
%	\includegraphics[width=\columnwidth]{./images/plot_objectives}
%	\\[3mm]
%	\includegraphics[width=\columnwidth]{./images/plot_infeasibility}
%	\end{minipage}
%\end{figure}

\begin{figure}
\centering
	\includegraphics[width=0.65\linewidth]{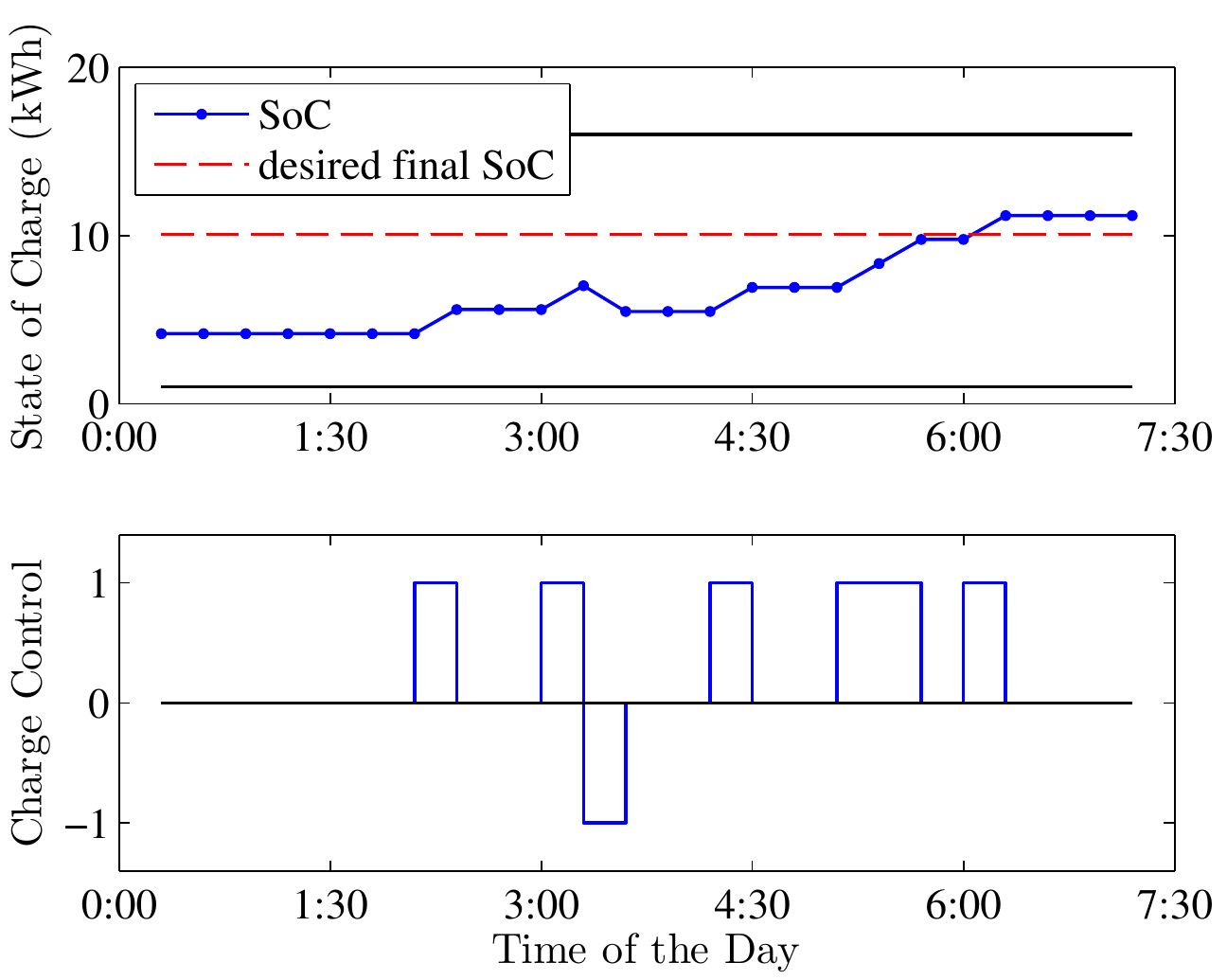}
	\caption{Local charge profile.}
	\label{fig:plot_v2g_local_charge_profiles}
\end{figure}

\section{Conclusion}
\label{sec:5_conclusion}
We have provided new results concerning the primal solutions recovered from lagrangian duals of problems structured as \ref{eq:P}. These results are of direct practical interest, in particular if one wishes to distribute the computational burden of calculating solutions to very large instances of such mixed integer programs. The strength of our results lies in the generality of $X_i$, which can include very sophisticated local models and therefore accommodate a large variety of practical applications.

It appears that many solution approaches can be derived from the result given in Theorem \ref{thm_1}; the one we propose in Section \ref{eq:P} is amenable to distributed computations and is simple to implement. It is also independent of the method used to solve the dual. Depending on the method chosen, convergence results could also be derived. One can for instance deploy the scheme exposed in \cite{anstreicher_two_well_known} together with our contraction method to recover an optimizer $\xslp$ of $\Plpbar$. According to Theorem \ref{thm_1}, this solution is known to satisfy integrality for at least $|I|-m$ subsystems. The non-integral components can be resolved by performing at most $m$ local optimizations, neglecting the coupling constraints. Owing to the contraction, the resulting solution retains feasibility, and satisfies performance bounds similar to \eqref{eq:performance_bound}.
%From a computational perspective, in our experimentations we have also noticed that the method appears to work even when suboptimal local solutions for the inner problem are found. This is of practical interest because in some cases even the decoupled local problems are be difficult to solve to optimality. 

%
\renewcommand{\thesection}{A}
\section{Appendix: Proofs}
\setcounter{equation}{0}
\numberwithin{equation}{section}
\label{sec:6_appendix}
\subsection{Proof of Fact \ref{prop:there_are_vertex_solns}} 
\label{sec:proof_fact_1}
\begin{proof} %[Proof of Fact \ref{prop:there_are_vertex_solns}]
	Due to the linearity of the objective function and the definition of the set $X_i$, it is straightforward to observe that 
		\begin{align}
		\label{eq:min}
		\begin{array}{ll}
			\min\limits_{x_i \in X_i}(c_i\tr + \lambda\tr H_i)x_i & = \min\limits_{x_i \in \conv(X_i)}(c_i\tr + \lambda\tr H_i)x_i \\
			& =\min\limits_{x_i \in \vert(X_i)}(c_i\tr + \lambda\tr H_i)x_i.
		\end{array}
		\end{align}
	Thus, the desired assertion readily follows from the fact that $X_i$ are non-empty. 
%\qed
\end{proof}

\subsection{Proof of Theorem \ref{thm_1}} \label{sec:proof_connectedness}
\begin{proof} %[Proof of Theorem \ref{thm_1}]
	Let us introduce two new LPs that are crucial for our subsequent analysis. First, we denote by $\xij$ the $j$-th element of $\vert(X_i)$ for $j \in J_i$ where $J_i = \left\{1, \dots, |\vert(X_i)|\right\}$. In view of \eqref{eq:min}, one can derive an LP version of the program \ref{eq:D} as
		\begin{eqnarray*}
		\label{eq:hidden_lp_version_of_d}
			\left\{ 
			\begin{array}{ll}
				\underset{\lambda}{\text{maximize}} & - \lambda\tr b + \sum\limits_{i \in I} \min\limits_{j \in J_i}  \left( c_i\tr \xij  + \lambda \tr H_i \xij \right) \\
				\text{subject to} & \lambda \succeq 0,
			\end{array}
			\right.
		\end{eqnarray*}
	which can then be cast as the LP
%		\begin{equation}
%		\begin{array}{lll}
%			\underset{\lambda,z}{\text{maximize}} & - \lambda'b + \sum\limits_{i \in I} z_i\\
%			\text{subject to} & z_i \leq c_i\tr \xij + \lambda' H_i \xij &  i\in I, j \in J_i\\
%			%& \sij \geq 0 & j \in J_i, i\in I\\
%			& \lambda \succeq 0
%		\end{array}.
%		\end{equation}
		\begin{equation}
		\left\{ 
		\begin{array}{lll} 
			\underset{\lambda,z,s}{\text{maximize}} & - \lambda\tr b + \sum\limits_{i \in I} z_i\\
			\text{subject to} & z_i = c_i\tr \xij  + \lambda\tr H_i \xij - \sij &  i\in I, j \in J_i\\
			& \sij \geq 0 & i\in I, j \in J_i\\
			& \lambda \succeq 0,
		\end{array}
		\right. 
		\label{eq:D_lp}
		\tag{$\mathcal{D}_{lp}$}
		\end{equation}
	where $\sij$ is the slack variable, and $z_i$ corresponds to the inner problem {\small $\min\limits_{j \in J_i} ( c_i\tr \xij  + \lambda\tr H_i \xij)$}. The second LP is the dual program of \ref{eq:D_lp} described as
		\begin{equation}
		\left\{ 
		\begin{array}{lll}
			\underset{p}{\text{minimize}} & \sum\limits_{i \in I} \sum\limits_{j \in J_i} \pij c_i\tr \xij \\
			\text{subject to} & \sum\limits_{i \in I} \sum\limits_{j\in J_i} \pij H_i \xij \preceq b\\
			& \sum\limits_{j \in J_i}\pij = 1 &  i\in I\\
			& \pij \geq 0 &  i \in I, j \in J_i,
		\end{array}
		\right. 
		\label{eq:PP_lp}
		\tag{$\mathcal{P}_{lp}$}
		\end{equation}
	where $\pij \in [0,1]$ is the scalar optimization variable associated to the vertex $\xij$. Let us denote by $\ps$ an optimizer of \ref{eq:PP_lp}. Note that \ref{eq:PP_lp} corresponds to an extended LP version of \ref{eq:P_lp}, yet they are not entirely equivalent problems. In particular, each $\ps$ leads to a unique $\xslp$, but the reverse does not hold, i.e., uniqueness of $\xslp$ does not imply uniqueness of $\ps$. We split the proof of the theorem by proving the following steps:

	\begin{enumerate}[(a)]
		\item \label{step:1} Let $I_1 \subset I$ be a subset of indices where $\xislp \in \vert(X_i)$ for all $i \in I_1$. Then, $\xislp$ is an optimizer of the inner problem, i.e., $\xislp \in \argmin{x_i \in X_i}(c_i\tr x  + {\ls}\tr  H_i x)$ where $\ls$ is an optimizer of \ref{eq:D}. 
		
		\item \label{step:2} Let $(\ls,z^\star, \sss)$ be an optimal solution of \ref{eq:D_lp} and $\ps$ be an optimal solution of \ref{eq:PP_lp} with the corresponding optimizer $\xslp$ for \ref{eq:P}. If the optimal pair $(\ps, \sss)$ is strictly complementary, then $\xislp = \xis$ for all $i$ in the subset $I_1$ as defined in \eqref{step:1}. 
		
		\item \label{step:3} If $\xslp$ is a vertex for the program \ref{eq:P}, then the subset $I_1$ in \eqref{step:1} can be selected such that $|I_1| \ge |I| - m$. 
		
	\end{enumerate}
	
	Before proceeding with the proofs of the above results, let us highlight how the desired assertion, under the unique primal and dual optimizers, follows from these three steps. First, note that if the optimal solution of \ref{eq:D} is unique, then $(\ls,z^\star, \sss)$ is the unique solution to \ref{eq:D_lp}: $\ls$ coincides for \ref{eq:D} and \ref{eq:D_lp} according to \cite[p. 89]{geoffrion_74}; $z^\star$ is the optimal objective of the $i$-th inner problem, and is thus uniquely determined for fixed $\lambda$; and finally $\sijs$ is also uniquely determined by the equality constraints in \ref{eq:D_lp}, in which it is the only variable left undetermined. Therefore, $\sss$ always belongs to the pair $(\ps, \sss)$ of primal-dual optimizers for which strict complementarity holds; the existence of such pair is guaranteed in the LP setting \cite[Thm.\ 2.1]{greenberg}. Moreover, if $\xslp$ is unique, then it is always a vertex. Hence, the requirements of the above results are fulfilled and the theorem assertion is concluded. 
	
	\paragraph{Proof of \eqref{step:1}:} Let $\xislp \in \vert(X_i)$. Then, owing to the uniqueness of $\xslp$, for any solution $\ps$ of \ref{eq:PP_lp} we have $\pijsh = 1$ for the corresponding $\hat \jmath \in J_i$. Therefore, by complementary slackness, the dual optimizer has $\sijsh = 0$, and the step \eqref{step:1} follows by
		\begin{align}
		\label{eq:z}
			z_i^\star = c_i\tr \xislp + {\ls}\tr H_i \xislp \le c_i\tr \xij  + {\ls}\tr  H_i \xij, \qquad \forall j \in J_i.
		\end{align}

	\paragraph{Proof of \eqref{step:2}:} Let $i\in I_1$ and, as explained in the proof of \eqref{step:1}, $\pijsh = 1$ for the corresponding $\hat \jmath \in J_i$. In light of the equality constraint $\sum_{j\in J_i} {\pij} = 1$, we have $\pijs = 0$ for all $j \neq \hat \jmath$. The assumed strict complementarity now  implies $\sijs \neq 0$ for all $j \neq \hat \jmath$, which leads to a strict inequality in \eqref{eq:z}. Hence, the inner problem $\min_{x_i \in X_i}(c_i\tr x  + {\ls}\tr  H_i x)$ has the unique solution $\xis$. Now the desired assertion follows from the step \eqref{step:1}.

% ========== old version without rank discussion ============
	
	%\paragraph{Proof of \eqref{step:3}:} Problem \ref{eq:PP_lp} has $m$ inequality constraints ($b \in \mathbb{R}^m$) and $|I|$ equality constraints, plus the positivity constraints on $\pij$. We can add slack variables to the complicating constraints thus obtaining a problem with $|I|+m$ equality constraints and positivity constraints on all the optimization variables (which are now the slacks and the variables $\pij$). It is well known (see \cite[Prop. 2.1.4 (b)]{bertsekas_cvx_theory}) that for a problem in this form any vector at a vertex contains at most $|I|+m$ non-zero values. This is also true for any optimal vertex. Thus $\mathrm{supp}(\ps) \leq |I|+m$. On the other hand, the constraint $\sum_{j \in J_i} \pij = 1, \ i \in I$ in \ref{eq:PP_lp} forces any feasible solution to have at least one variable $\pij$ larger than zero for each $i \in I$, i.e.\ $\mathrm{supp}(\ps) \geq |I|$. It thus follows that at least $|I|-m$ entries must be set to 1 at any feasible vertex solution, including an optimal one.	

% ========== new version with rank discussion ============

	\paragraph{Proof of \eqref{step:3}:} Problem \ref{eq:PP_lp} has $m$ inequality constraints ($b \in \mathbb{R}^m$) and $|I|$ equality constraints, plus the positivity constraints on $\pij$. We can add slack variables to the complicating constraints thus obtaining a problem with $|I|+m$ equality constraints and positivity constraints on all the optimization variables, which are now the slacks $q \in \mathbb{R}^m_+$ and the variables $\pij$. The constraints of \ref{eq:PP_lp} can therefore be rewritten as $\bigH (p\tr,q\tr)\tr = (b\tr,1\dots,1)\tr$, $p,q \geq 0$,
	%\begin{eqnarray}
	%	\bigH \begin{bmatrix} p\\ q\end{bmatrix} = \begin{bmatrix}b\\1\\\vdots\\1\end{bmatrix}, \quad p,q \geq 0,
	%\end{eqnarray}
	where the matrix $\bigH$, is defined as
	\begin{align}
	\begin{small}
		\label{big matrix}
		\bigH = \left[ 
			%\begin{array}{c:c:c|ccc|c:c:c|c}
			\begin{array}{c:c:c|c|c:c:c|c}
				H_1 x_1^{1} & \cdots & H_1 x_1^{J_1} 
				&\cdots
				& H_{|I|} x_{|I|}^{1} & \cdots & H_{|I|} x_{|I|}^{J_{|I|}}
				& I_{m \times m}
				\\ \hline
				1 & \cdots & 1 
				&\cdots
				& 0 & \cdots & 0 
				& 0
				\\ \hline
				\enskip&\vdots&\enskip
				&\ddots
				&\enskip&\vdots&\enskip
				& \vdots
				\\ \hline
				%0 & \cdots & 0
				\undermat{\bigH_1}{0 & \cdots & 0}
				&\cdots&
				\undermatwo{\bigH_{|I|}}{1 & \cdots & 1}
				& 0
			\end{array}
		\right]	
	\end{small}
	\end{align}
	\newline
	\newline
	in which we have also defined the submatrices $\bigH_i$, $i \in I$.
	It is well known (see \cite[Prop. 2.1.4 (b)]{bertsekas_cvx_theory}) that for a problem in this form any feasible point is a vertex if and only if the columns of $\mathbb{H}$ corresponding to the non-zero coordinates of the point are linearly independent. This is then true for any optimal vertex. Thus, $\mathrm{supp}(\ps) \leq |I|+m$, as the number of rows of $\bigH$ is $|I|+m$. On the other hand, the constraint $\sum_{j \in J_i} \pij = 1, \ i \in I$ in \ref{eq:PP_lp} forces any feasible solution to have at least one variable $\pij$ larger than zero for each $i \in I$, i.e.\ $\mathrm{supp}(\ps) \geq |I|$. It thus follows that at least $|I|-m$ entries must be set to 1 at any feasible vertex solution, including an optimal one.

\end{proof}

\subsection{Proof of Theorem \ref{theorem_main_tight}}
\label{sec:proof_contraction_works}
\begin{proof} %[Proof of Theorem \ref{thm:solutions_quality}]
	Note that by construction $\xsb \in \mxsb$ for all $i \in I$. Then, it only suffices to show $\sum_{i \in I} H_i \xisb \preceq b$. By virtue of Theorem \ref{thm_1}, we know that there exists a subset $I_1 \subset I$ such that $|I_1| \ge |I|-m$ and  $\xisb = \xislpb$. Setting $I_2 = I \setminus I_1$, we have
		\begin{align*}
			\sum\limits_{i \in I} H_i \xisb & = \sum\limits_{i \in I_1} H_i \xisb + \sum\limits_{i \in I_2} H_i \xisb \\
			& = \sum\limits_{i \in I_1} H_i \xislpb + \sum\limits_{i \in I_2} H_i \xisb \\
			& = \underbrace{\sum\limits_{i \in I} H_i \xislpb}_{\preceq \bar b} + \underbrace{\sum\limits_{i \in I_2} \big(H_i \xisb - H_i \xislpb \big)}_{\preceq \rho} \preceq b .
		\end{align*}
%\qed
\end{proof}

\subsection{Proof of Theorem \ref{thm:solutions_quality}} \label{sec:proof_quality_of_solns}

\begin{proof}
	Note that 
	\begin{eqnarray*}
		J_{\mathcal{P}}(\xsb) - \JP & = 
		& \underbrace{ \left[ J_{\mathcal{P}}(\xsb) - J^\star_{\mathcal{\overline{P}}_{\mathrm{LP}}} \right] }_{\text{(i)}}  + 
		\underbrace{ \left[  J^\star_{\mathcal{\overline{P}}_{\mathrm{LP}}} - \JPLP \right] }_{\text{(ii)}}  \\
		& & + \underbrace{ \left[ \JPLP  - \JP \right] }_{\text{(iii)}},
	\end{eqnarray*}
	where each term can be bounded as follows:
\begin{enumerate}[(i)]
	\item According to Theorem \ref{thm_1}, there exists an index set $I_1$ with $|I_1| \geq |I|-m$ such that, for all $i \in I_1$, $\xislpb = \xisb$. Defining $I_2 \doteq I \setminus I_1$, we have 
	\begin{align*}
		J_\mathcal{P}(\xsb) - J_{\mathcal{P}}(\xslpb) & = \sum_{i \in I_2} \left( c_i\tr \xisb - c_i\tr \xislpb \right)\\
		% \leq & \color{red}\underset{\begin{subarray}{c}\tilde{I} \subseteq I\\ |\tilde{I}| =  \color{red}(???)\color{black}m \end{subarray}}{\max} \left( \sum\limits_{i \in \tilde{I}} \underset{x_i \in X_i}{\max} c_i\tr x_i - \underset{x_i \in X_i}{\min} c_i\tr x_i \right) \color{black}\\
		& \leq m \cdot \max_{i \in I} \Big( \max_{x_i \in X_i} c_i\tr x_i - \min_{x_i \in X_i} c_i\tr x_i \Big) \\
		& = m \max_{i \in I} \gamma_i.
	\end{align*}
	
	\item By virtue of \cite[Lemma 1]{nedic_ozdaglar}, given the Slater's point $\wt x$ we can bound $\|\lsb\|_1$ by
		\begin{align*}
		%\label{eq:nedic_bound}	
			\|\lambda^\star \|_1 \le \frac{1}{\zeta |I|}\bigg( \sum_{i\in I} c_i\tr \wt x_i  - \Big( \sum_{i\in I} \min_{x_i \in {X_i}}\left(c_i\tr + \lambda H_i\right) {x}_i \Big) - \lambda\tr  b \bigg),\\
			\quad \forall \lambda \succeq 0.
		\end{align*}
	Setting $\lambda = 0$ in the above, we arrive at
	\begin{align*}
			\|\lambda^\star\|_1 \le  \frac{1}{\zeta} \max_{i \in I} \gamma_i, \qquad \gamma_i \Let \max_{x_i \in \mathrm{X_i}} c_i\tr x_i - \min_{x_i \in \mathrm{X_i}} c_i\tr x_i.
	\end{align*} 
	In light of perturbation theory \cite[Sec.\ 5.6.2]{boyd_convex_book}, one can bound the term (ii) from above by $(\lsb)\tr  \rho$, where $\lsb$ is the optimizer of the program $\overline{\mathcal{D}}$ and $\rho$ is the contraction vector as defined in \eqref{rho}. Thus, 
	\begin{align*}
		J^\star_{\Plpbar} - \JPLP & \le (\lsb)\tr \rho \le \|\lsb\|_{1} \|\rho\|_\infty \le \frac{\|\rho\|_\infty }{\zeta} \max_{i \in I}  \gamma_i.
		%\qquad \Big (\text{resp.} \quad \frac{m}{\za} \max_{i \in \wt I} \gamma_i \|\rho\|_\infty \Big )
	\end{align*}
	\item By definition, $\overline{\mathcal{P}}_{\mathrm{LP}}$ is a relaxed version of \ref{eq:Pbar}. Hence $\JPLP  - \JP \leq 0$.
\end{enumerate}
%\qed
\end{proof}

\subsection{Proof of Theorem \ref{thm:reducing_contraction_with_rank_hi}}
\label{sec:proof_reduced_conservatism}

For a given $\xs \in \mxs$, let us introduce $\mis{\I} = \left\{ i \in I \left| \right. \xislp \neq \xis \right\}$. 
%\robincom{$\xislp$ assumed unique, but $\xis$ not, so $\mis{\I}_k$ not unique}
For the $k$-th complicating constraint we then have
\begin{eqnarray*}
	\sum_{i \in I} H_i^k \xis & = & \sum_{i \in I \setminus \mis{\I}} H_i^k \xislp + \sum_{i \in \mis{\I}} H_i^k \xis\\
	& \leq & b + \sum_{i \in \mis{\I}} H_i^k (\xis - \xislp)\\
	& = & b + \sum_{i \in \mis{\I} \cap I_k} H_i^k (\xis - \xislp)\\
	& \leq & b + |\mis{\I} \cap I_k| \cdot \max_{i \in I_k} \left( \max_{x_i \in X_i} H_i^k x_i - \min_{x_i \in X_i} H_i^k x_i \right)
\end{eqnarray*}
In order to get a bound on $|\mis{\I} \cap I_k|$, we resort again to the program \ref{eq:PP_lp}. We know that, under Assumption \ref{assumption:uniqueness}, $\xis \neq \xislp$ if and only if $\xislp \notin \vert(X_i)$, as shown in Appendix \ref{sec:proof_connectedness}. Thus, if $i \in \mis{\I}$ there are at least two $j \in J_i$ such that $\piijs > 0$ in the corresponding program \ref{eq:PP_lp}. And for every $i \in I$, there is always at least one $j \in J_i$ such that $\piijs > 0$. Thus
\begin{align*}
	|\mathrm{supp}(\piiks)| \geq |\I_k \setminus \mis{\I}| + 2|\mis{\I} \cap \I_k| = |\I_k| + |\mis{\I} \cap \I_k|.
\end{align*}
%\robincom{notation for $\piiks$ is inconsistent. Should be $[p_i^\star]_{i \in I_k}$ or something.}
On the other hand, in view of \cite[Prop.\ 2.1.4 (b)]{bertsekas_cvx_theory}, and as discussed in Appendix \ref{sec:proof_connectedness}, the columns within the matrix $\bigH$ (defined in Equation \eqref{big matrix}) corresponding to non-zero $\pijs$ coordinates must be linearly independent. Hence $|\mathrm{supp}(\ps)| \leq \rank(\bigH)$ and in particular
\begin{align*}
	|\mathrm{supp}(\piiks)| \leq \rank([\bigH_i]_{i \in \I_k}).
\end{align*}
%On the other hand we know that, under Assumption \ref{assumption:uniqueness}, $\xis \neq \xislp$ if and only if $\xislp \notin \vert(X_i)$, as shown in Appendix \ref{sec:proof_connectedness}. Thus, if $i \in \mis{\I}$ there are at least two $j \in J_i$ such that $\piijs > 0$ in the corresponding program \ref{eq:PP_lp}. And for every $i \in I$, there is always at least one $j \in J_i$ such that $\piijs > 0$. Thus
%\begin{align*}
%	|\mathrm{supp}(\piiks)| \geq |\I_k \setminus \mis{\I_k}| + 2|\mis{\I}_k| = |\I_k| + |\mis{\I}_k|.
%\end{align*}
Finally, from the structure of $\bigH$ defined in Equation \eqref{big matrix}, it is clear that
\begin{align*}
	\rank([\bigH_i]_{i \in \I_k}) \leq \rank([H_i]_{i \in \I_k}) + |\I_k|.
\end{align*}
Combining the above inequalities immediately leads to
\begin{align*}
	\rank([H_i]_{i \in \I_k}) \geq |\mis{\I} \cap \I_k|,
\end{align*}
as desired.

\subsection{Proof of Proposition \ref{thm:sens}} \label{sec:pf:sens}
	The objective is to establish a connection from the sensitivity of the large scale, but structured, optimization program \ref{eq:P_lp} to a reduced version in which only $m$ subsystems appear. To this end, we first start with some preparatory lemmas. 
	
	\begin{Lem}
		\label{lem:convex}
		Let $J:\R_+ \ra \R$ be a convex function. Suppose there exist a constant $L$ and a sequence $\left\{\eps_n\right\}_{n\in\N}$ such that $\eps_n \ra 0$ as $n$ goes to infinity and $J(0) - J(\eps_n) \le L\eps_n$ for all $n 
		\in \N$. Then, $J(0) - J(\eps) \le L \eps$ for all $\eps \in \R_+$.
	\end{Lem}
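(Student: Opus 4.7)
The plan is to exploit the standard monotonicity of difference quotients for convex functions to upgrade the bound holding along a single sequence $\eps_n \to 0$ into a bound holding on all of $\R_+$.

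First, I would recall (or re-derive in one line) the following consequence of convexity: for any $0 < s < t$, the three-slope inequality applied to the points $0$, $s$, $t$ yields
\begin{equation*}
\frac{J(s) - J(0)}{s} \le \frac{J(t) - J(0)}{t},
\end{equation*}
equivalently,
\begin{equation*}
\frac{J(0) - J(t)}{t} \le \frac{J(0) - J(s)}{s}.
\end{equation*}
Thus the mapping $\eps \mapsto \bigl(J(0) - J(\eps)\bigr)/\eps$ is non-increasing on $(0,\infty)$.

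Next, I would fix an arbitrary $\eps > 0$ and use the hypothesis $\eps_n \to 0$ to select some index $n$ with $0 < \eps_n < \eps$. Applying the monotonicity above with $s = \eps_n$ and $t = \eps$ gives
\begin{equation*}
\frac{J(0) - J(\eps)}{\eps} \le \frac{J(0) - J(\eps_n)}{\eps_n} \le L,
\end{equation*}
where the last inequality is the assumed bound along the sequence. Multiplying through by $\eps > 0$ yields $J(0) - J(\eps) \le L\eps$. The boundary case $\eps = 0$ is trivial since both sides vanish.

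There is essentially no serious obstacle here: the only subtlety is checking that $\eps_n$ can always be chosen strictly smaller than the arbitrary $\eps > 0$, which is immediate from $\eps_n \to 0$ (we can discard the finitely many indices for which $\eps_n \ge \eps$). The lemma is a clean application of the well-known fact that the right-difference quotient of a convex function at a point is monotone in the argument.
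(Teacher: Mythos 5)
Your proof is correct and rests on exactly the same convexity inequality as the paper's: the paper applies $J(\eps_n) \le (1-\alpha)J(0) + \alpha J(\bar\eps)$ with $\alpha = \eps_n/\bar\eps$ inside a proof by contradiction, which is precisely your monotone-difference-quotient argument rearranged. The direct phrasing you chose is, if anything, slightly cleaner, but it is not a different route.
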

	
	\begin{proof}
		For the sake of contradiction, suppose there exists an $\bar \eps$ such that $J(0) - J(\bar \eps) > L \bar \eps$. Let $n$ be large enough so that $\eps_n \in (0,\bar \eps)$ and $\alpha \Let \frac{\eps_n}{\bar \eps}$. In light of convexity of $J$, we have 
			\begin{align*}
				J(\eps_n) & \le (1 - \alpha) J(0) + \alpha J(\bar \eps)\\ 
				& < (1-\alpha) J(0) + \alpha (J(0) - L \bar \eps) = J(0) - L \eps_n,
			\end{align*}
		which is obviously in contradiction with our assumption. 
	%\qed
	\end{proof}

	\begin{Lem}
		\label{lem:sens}
			Consider the parametrized LP
				\begin{align}
					\left\{
					\begin{array}{ll}\vspace{1mm}
						\underset{x}{\text{minimize}} & cx \\
						\text{subject to} & Ax \preceq b + \eps \ind,
					\end{array}
					\right.
					%\tag{$\AP$}
					\label{eq:P_eps}
				\end{align}
			where $\eps \in \R_+$ is the parameter and $\ind \Let [1, \dots, 1]\tr \in \R^m$. Suppose the program admits a vertex optimizer whose objective value is denoted by $J(\eps)$. Then, there exists a constant independent of the resource vector $b$, denoted by $L(A,c)$, such that 
			\begin{align*}
				0 \le J(0) - J(\eps) \le L(A,c) \eps, \qquad \forall \eps \in \R_+.
			\end{align*}
	\end{Lem}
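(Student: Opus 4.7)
I would split $J(0) - J(\eps)$ into its two halves and handle them separately. The non-negativity $J(0) - J(\eps) \ge 0$ is immediate from monotonicity: since $\ind \succeq 0$, enlarging $\eps \ge 0$ only enlarges the feasible set of \ref{eq:P_eps}, so $J$ is non-increasing on $\R_+$. The real content is therefore the linear upper bound $J(0) - J(\eps) \le L(A,c) \eps$.

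For that bound, the plan is to pass to the LP dual of \ref{eq:P_eps}, which reads $\max \ -\lambda\tr(b + \eps \ind)$ subject to $A\tr \lambda = -c\tr$, $\lambda \succeq 0$. The crucial structural observation is that the dual feasible polyhedron
\begin{align*}
    \Lambda \eqdef \{\lambda \in \R^m_+ : A\tr \lambda = -c\tr\}
\end{align*}
depends only on $A$ and $c$, not on $b$ or $\eps$. Since the primal admits a vertex optimizer by hypothesis, it is feasible and bounded below, so strong duality gives $J(\eps) = \max_{\lambda \in \Lambda} \bigl(-\lambda\tr b - \eps\lambda\tr\ind\bigr)$. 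Moreover, because $\Lambda \subseteq \R^m_+$ is pointed, the fundamental theorem of linear programming provides a vertex optimizer $\lambda^\star \in \vert(\Lambda)$ for the $\eps = 0$ dual. Plugging this same $\lambda^\star$ into the $\eps$-perturbed dual as a (generally suboptimal) feasible point yields
\begin{align*}
    J(\eps) \ge -\lambda^{\star\,\top} b - \eps\, \lambda^{\star\,\top} \ind = J(0) - \eps \, \lambda^{\star\,\top}\ind.
\end{align*}

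To remove the residual $b$-dependence hidden in the choice of $\lambda^\star$, I would define
\begin{align*}
    L(A,c) \eqdef \max\bigl\{\, \lambda\tr \ind : \lambda \in \vert(\Lambda) \,\bigr\},
\end{align*}
which is finite because a polyhedron has finitely many vertices, and manifestly depends only on $A$ and $c$. Since $\lambda^\star \in \vert(\Lambda)$, we obtain $J(0) - J(\eps) \le \eps \, \lambda^{\star\,\top}\ind \le L(A,c)\,\eps$, as required. The main obstacle is justifying that the $\eps = 0$ dual optimum can always be taken at a vertex of $\Lambda$: this needs (i) $\Lambda \neq \emptyset$, which follows from the primal having a vertex optimizer and strong duality; (ii) $\Lambda$ being pointed, which follows from $\Lambda \subseteq \R^m_+$; and (iii) the existence of a vertex optimum in a pointed polyhedron whose associated LP has a finite value. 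Once these three points are in place, the $b$-independent constant $L(A,c)$ drops out automatically.
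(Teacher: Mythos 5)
Your proof is correct, but it takes a genuinely different route from the paper's. The paper argues on the primal side: it takes a vertex optimizer $x^\star(\eps)$, notes that it is pinned down by an invertible submatrix $\sub{A}(\eps)$ of active rows satisfying $\sub{A}(\eps)x^\star(\eps)=b+\eps\ind$, extracts a sequence $\eps_n\ra 0$ along which this submatrix is constant (finitely many submatrices), computes $J(0)-J(\eps_n)=-c\sub{A}^{-1}\ind\,\eps_n$ explicitly, and then invokes convexity of $\eps\mapsto J(\eps)$ together with an auxiliary lemma (Lemma \ref{lem:convex}) to upgrade the bound from the sequence to all $\eps\in\R_+$. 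You instead work entirely in the dual: since the dual feasible set $\Lambda=\{\lambda\succeq 0: A\tr\lambda=-c\tr\}$ does not involve $b$ or $\eps$, a single vertex dual optimizer $\lambda^\star$ at $\eps=0$ (which exists because $\Lambda$ is pointed and the dual value $J(0)$ is finite by strong duality) serves as a feasible point for every perturbed dual, and weak duality immediately gives $J(\eps)\ge J(0)-\eps\,\lambda^{\star\top}\ind$ for all $\eps$ at once, with $L(A,c)=\max\{\lambda\tr\ind:\lambda\in\vert(\Lambda)\}$ manifestly independent of $b$. Your argument is shorter, needs neither the limiting sequence nor Lemma \ref{lem:convex}, and makes the $b$-independence of the constant transparent rather than something to be checked over submatrix choices; the paper's primal route, on the other hand, yields a more explicit constant in terms of inverses of submatrices of $A$ and develops the convexity lemma that it reuses anyway in the proof of Proposition \ref{thm:sens}. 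The only hypotheses you must (and do) account for are that strong duality holds at $\eps=0$ so that $-\lambda^{\star\top}b=J(0)$, and that $\Lambda$ is nonempty and pointed so that a vertex dual optimizer exists.
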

	
	\begin{proof}
		We only need to prove the right-hand side of the inequality as the left-hand side trivially holds since the parameter $\eps$ is non-negative and only relaxes the constraint. Let $x^\star(\eps)$ be a vertex optimizer for \eqref{eq:P_eps}.
		%Without loss of generality, we assume that the optimizer is a vertex in the feasible set and denoted by $x^\star(\eps)$. 
		By virtue of \cite[Prop.\ 2.1.4 (a)]{bertsekas_cvx_theory}, given a fixed $\eps$, we know that there exists a collection of $m$ linearly independent rows of the matrix $A$, denoted by the invertible submatrix $\sub A(\eps)$, such that $\sub A(\eps) x^\star(\eps) = b + \eps \ind$. Note that the number of submatrices of matrix A is, of course, finite. Therefore, one can always pick a sequence $\left\{\eps_n\right\}_{n \in \N}$ such that $\eps_n \ra 0$ as $n$ goes to infinity and the corresponding submatrix $\sub A(\eps_n)$ is constant; let us denote this submatrix by $\sub A$. We thus have
			\begin{align*}
				J(0) - J(\eps_n) = c x^\star(0) - c x^\star(\eps_n) =  - c \sub{A}^{-1} \ind \eps_n \le L(c,A) \eps_n, %\qquad L(c,A)
			\end{align*}
		where the constant can be, for example, $L(c,A) \Let m \|c\|_2 \|\sub{A}^{-1} \|_2$. Note that, by construction, the submatrix $\sub{A}$ is invertible and the norm $\|\sub{A}^{-1} \|$ is bounded. The desired assertion now follows from the convexity of the perturbation mapping $\eps \mapsto J{(\eps)}$ \cite[Sec.\ 5.6.2]{boyd_convex_book} and Lemma \ref{lem:convex}.
	%\qed
	\end{proof}

	\begin{proof}[Theorem \ref{thm:sens}]
	\label{sec:proof_sens}
		%To prove the main assertion we need an auxiliary program which is a reduced version of \ref{eq:P_lp_eps} associated with a partition of the index set $I$. 
		Given the partition $I = I_1 \cup I_2$, we introduce a reduced version of \ref{eq:P_lp_eps} associated with the index set $I_2$ as follows:
			\begin{align}
				\left\{
				\begin{array}{lll}\vspace{1mm}
					\underset{(x_i)_{i \in I_2}}{\text{minimize}} & \sum\limits_{i \in I_2} c_i\tr x_i\\
					\text{subject to} & \sum\limits_{i \in I_2} H_i x_i \preceq b - \sum\limits_{i \in I_1} H_i \xislp + \eps \ind \\
					& x_i \in \conv(X_i) &  i \in I_2,
				\end{array}
				\right. 
				\tag{$\mathcal{R}_{I_2}(\eps)$}
				\label{eq:R_eps}
			\end{align}
		where $\xslp$ is an optimizer of the program \ref{eq:P_lp}. We denote the optimal value of \ref{eq:R_eps} by $\JRp{\eps}$. Let us highlight that for any partition of the index set $I = I_1 \cup I_2$ the program \ref{eq:R_eps} is always feasible as $\left(\xslp \right)_{i \in I_2}$ trivially satisfies the constraints for any $\eps \in \R_+$. As a first step in the proof, we show that there exist an index subset $I_2$ and a sequence of $\left\{\eps_n\right\}_{n \in \N}$ such that $|I_2| \le m$ and the optimal values $\JPLPp{\eps_n}$ and $\JRp{\eps_n}$ have the same sensitivity in terms of the parameter $\eps$. 
		
		%It is a well-known result in the context of perturbation theory that the mapping $\eps \mapsto \JPLP(\eps)$ is convex on $[0,\infty)$, and in particular continuous \cite[Sec.\ 28]{rockafellar}. Therefore $\JPLPp{\eps} \ra \JPLP$ as $\eps$ tends to zero, and since the optimizer of the program $\mathcal{P_{\mathrm{LP}}}(0)$ is unique and $(X_i)_{i\in I}$ are all compact, then $\xslp(\eps) \ra \xslp$ where $\xslp(\eps)$ is a vertex optimizer of $\Plp(\eps)$. %\robincom{still a bit open to me the fact that $\xslp(\eps_n)$ is not necessarily unique. Maybe call $\xslp(\eps_n)$ ``an arbitrary selection of a vertex solution of $\Plp(\eps_n)$''}
		
		Let $\xslp(\eps)$ be a vertex optimizer of the program $\Plp(\eps)$; the existence of such a vertex is always ensured since the feasible set of $\Plp(\eps)$ is a compact polytope. In light of part \eqref{step:3} in the proof of Theorem \ref{thm_1}, we know that for each $\xslp(\eps)$ there exists a partition $I = I_1(\eps) \cup I_2(\eps)$ where $|I_2(\eps)| \le m$ and $\xilp{\eps} \in \vert(X_i)$ for all $i \in I_1(\eps)$. Due to the fact that the number of the subsets of $I$ as well as the set $\vert(X_i)$ is finite, then there exists a partition $I = I_1 \cup I_2$ and a subsequence of $\left\{ \eps_{n}\right\}_{n \in \N}$ such that $|I_2| \le m$ and $\xilp{\eps_n}$ are constants for $i \in I_1$. By compactness we can, without loss of generality, assume that this sequence is convergent. It is a well-known result in the context of perturbation theory that the mapping $\eps \mapsto \JPLP(\eps)$ is convex on $[0,\infty)$, and in particular continuous \cite[Sec.\ 28]{rockafellar}. Hence, one can infer that $\xilp{\eps_n}$ converges to an optimizer of \ref{eq:P_lp}, which consequently implies $\xilp{\eps_n} = \xislp$ for all $i \in I_1$. Therefore, by construction of the auxiliary program \ref{eq:R_eps} we can deduce
			\begin{align*}
				\JPLPp{0} - \JPLPp{\eps_n} = \JRp{0} - \JRp{\eps_n}, \qquad \forall n \in \N.
			\end{align*}
		Now, in view of Lemma \ref{lem:sens}, we know that the right-hand side of the above equality is non-negative and can be upper bounded by a constant only depending on the data of the subsystems indexed in $I_2$, i.e., $(\data{i})_{i\in I_2}$. Let us denote this constant by $L(I_2)$. Then, we have 
			\begin{align*}
				0\le \JPLPp{0} - \JPLPp{\eps_n} \le L(I_2) \eps_n, \qquad \forall n \in \N,
			\end{align*}
		that by virtue of Lemma \ref{lem:convex} leads to the desired assertion.
	%\qed
	\end{proof}

\renewcommand{\thesection}{B}
\section{Simulation Tables}
\label{sec:7_sim_results}
Table \ref{tab:EV_data} contains the parameters used in the simulation. Values in brackets are sampled from a uniform distribution over the given interval. Tables \ref{tab:EV_charge_result} and \ref{tab:EV_V2G_result} report the numeric values of the performance results derived from the simulations discussed in Section \ref{sec:4_example}.

\vspace{1cm}

\begin{table*}[h!]
\begin{center}
\scalebox{0.9}{
\begin{tabular}{|l | c | c | c | c |c | c |}
\hline
Parameter & $|I|$ & $P_i$ & $\emin$ & $\emax$ & $\einit$ & $\eref$ \\
\hline
\hline
Unit & PEVs & kW & kWh & kWh & kWh & kWh \\
\hline
Value & $200-10000$ & $[3 ; 5]$ & $1$ & $[8;16]$ & $[0.2 ; 0.5] \cdot \emax$ & $[0.55 ; 0.8] \cdot \emax$\\ 
\hline
\end{tabular}
}

\begin{tabular}{c}
\vspace{-0.1cm}
\end{tabular}

\scalebox{0.98}{
\begin{tabular}{|l | c | c | c | c | c | c | c| c|}
\hline
Parameter & $\zeta_i$ & $\Delta T$ & $N$ & $\pmax$ & $\pmin$ & $C^u[k]$ & $C^v[k]$ &  $\delta_i^u, \delta_i^v$	\\
\hline
\hline
Unit & $-$ & min & $-$ & kW & kW & \euro/MWh & \euro/MWh & \euro/MWh\\
\hline
Value & $[0.015 ; 0.075]$ & $20$ & $24$ &
%$3 \cdot \frac{ \sum_{i \in I} \eref - \einit }{N \cdot \Delta T}$ 
$3\cdot|I|$
& $- \pmax$ & $[19 ; 35]$	& $1.1 \cdot C^u[k]$ & $[-0.3 ; 0.3]$\\
\hline
%$\delta^u_i[k]$ & $[(-0.01) - 0.01] \cdot C^u[k]$ & \euro/MWh\\
%$\delta^v_i[k]$ & $[(-0.01) - 0.01] \cdot C^v[k]$ & \euro/MWh
\end{tabular}
}
\end{center}
\caption{Parameters used in the simulations. Values in the brackets are sampled from a uniform distribution.}
\label{tab:EV_data}
\end{table*}

\begin{table*}
\begin{center}
\begin{tabular}{l ccc c ccc c ccc}
\toprule
& \multicolumn{7}{c}{Proposed Method}& & \multicolumn{3}{c}{CPLEX}\\
& \multicolumn{3}{c}{Opt. Gap (\%)} & & \multicolumn{3}{c}{Solve time$^\dagger$ (sec)} & & \multicolumn{3}{c}{Solve time (sec)}\\
\cmidrule(l){2-4}
\cmidrule(l){6-8}
\cmidrule(l){10-12}
\# PEVs & Min & Avg 	& Max 	&& Min 		& Avg 	& Max 	&& Min 		& Avg 		& Max \\
\midrule
\midrule
200 	& 3.24	& 3.32	& 3.41	&& *		& *			& *			&& 1.97		& 2.16		& 3.74  \\
350		& 2.21	& 2.44	& 2.58	&& *		& *			& *			&& 1.13		& 1.79 		& 2.30	\\
500 	& 1.40	& 1.46	& 1.54	&& *		& *			& *			&& 1.02		& 1.24		& 1.52	\\
700 	& 1.01	& 1.05	& 1.10	&& 0.31		& 0.31		& 0.31		&& 1.27		& 1.29		& 1.31	\\
1000 	& 0.68	& 0.72	& 0.76	&& 0.44		& 0.44		& 0.44		&& 1.68		& 1.70		& 1.73	\\
1500 	& 0.46	& 0.47	& 0.49	&& 0.67		& 0.70		& 0.70		&& 2.39		& 2.42		& 2.45	\\
2000 	& 0.33	& 0.35	& 0.36	&& 0.88		& 0.88		& 0.89		&& 3.22		& 3.30		& 3.41	\\
5000 	& 0.13	& 0.14	& 0.14	&& 2.17		& 2.21		& 2.23		&& 8.00		& 8.18		& 8.43	\\
7000 	& 0.05	& 0.05	& 0.06	&& 3.14		& 3.15		& 3.16		&& 11.40		& 11.74		& 13.25	\\
10000 	& 0.03	& 0.03	& 0.04	&& 4.45		& 4.51		& 4.52		&& 17.41		& 17.77		& 18.43	\\
\bottomrule
\multicolumn{12}{l}{\small{(*) $\leq 0.3$ sec (imprecise measurements).}}\\
%\multicolumn{12}{l}{\small{$^\dagger$sum of the time spent solving the dual iterations.}}\\
%\multicolumn{12}{l}{\small{$^{\dagger\dagger}$construction with Yalmip \cite{yalmip} plus solution time.}}
\end{tabular}
\end{center}
\caption{Charging only.}
\label{tab:EV_charge_result}
\end{table*}

\begin{table*}
\begin{center}
\begin{tabular}{l ccc ccc ccc}
\toprule
& \multicolumn{6}{c}{Proposed Method} & \multicolumn{3}{c}{CPLEX}\\
& \multicolumn{3}{c}{Opt. Gap (\%)} & \multicolumn{3}{c}{Solve time (min)} & \multicolumn{3}{c}{Solve time (min)}\\
\cmidrule(l){2-4}
\cmidrule(l){5-7}
\cmidrule(l){8-10}
%& \multicolumn{7}{c}{Proposed Method}& & \multicolumn{3}{c}{CPLEX}\\
%& \multicolumn{3}{c}{Opt. Gap (\%)} & & \multicolumn{3}{c}{Solve time (min)} & & \multicolumn{3}{c}{Solve time (min)}\\
%\cmidrule(l){2-4}
%\cmidrule(l){6-8}
%\cmidrule(l){10-12}
\# PEVs & Min & Avg 	& Max 	& Min 		& Avg 	& Max 	& Min 		& Avg 		& Max \\
\midrule
\midrule
200 	& 8.82	& 10.51	& 12.37	& 1.05		& 1.06	& 1.08	& 0.06		& 0.07		& 0.07 		\\
350		& 2.93	& 3.24	& 3.51	& 1.48		& 1.49	& 1.52	& 1.56		& 6.89	 	& 15.81		\\
500 	& 2.05	& 2.15	& 2.24	& 1.85		& 1.93	& 2.62	& 15.21$^*$	& 65.10$^*$  & 262.81$^*$	\\
700 	& 1.48	& 1.54	& 1.61	& 2.43		& 2.44	& 2.48	& --		& --		& --		\\
1000 	& 1.01	& 1.05	& 1.10	& 3.24		& 3.26	& 3.28	& --		& --		& --		\\
1500 	& 0.65	& 0.68	& 0.72	& 4.72		& 4.74	& 4.81	& --		& --		& --		\\
2000 	& 0.45	& 0.50	& 0.53	& 6.19		& 6.21	& 6.23	& --		& --		& --		\\
5000 	& 0.12	& 0.15	& 0.20	& 14.88		& 14.90	& 14.95	& --		& --		& --		\\
7000 	& 0.09	& 0.10	& 0.12	& 20.59		& 20.77	& 21.94	& --		& --		& --		\\
10000 	& 0.06	& 0.07	& 0.07	& 29.34		& 29.39	& 29.58	& --		& -- 		& --		\\
\bottomrule
\multicolumn{10}{l}{\small{(*) failed to solve two instances (out of memory)}}\\
\multicolumn{10}{l}{\small{(--) out of memory before attaining the desired optimality gap}}
\end{tabular}
\end{center}
\caption{Charging and V2G.}
\label{tab:EV_V2G_result}
\end{table*}

\newpage
%\section*{References}
\bibliographystyle{alpha}
\bibliography{mybibliography}

\end{document}